\renewcommand{\v}{\textup{\textsf{v}}}
\theoremstyle{plain}
\newtheorem{thm}{Theorem}[section]
\newtheorem{lem}[thm]{Lemma}
\newtheorem{rem}[thm]{Remark}
\newtheorem{conj}[thm]{Conjecture}
\noindent \emph{Proof.} {}{#1}{}}{\hfill
\theoremstyle{plain} 
\newcommand{\thistheoremname}{}
\newtheorem{genericthm}[section]{\thistheoremname}
\theoremstyle{definition}
\def\less{\setminus}
\newcounter{counter}
\def\ket{\mathcal{K}_8^{-3}}
\newcommand{\km}[2]{\mathcal{K}_{#1}^{-{#2}}}
\def\kns{\km{9}{6}}
\def\knf{\km{9}{5}}
\def\se{\succcurlyeq}
\def\dfn#1{{\sl #1}}
\title{Every graph with no $\mathcal{K}_9^{-6}$ minor is $8$-colorable}
\author{Michael Lafferty\thanks{Department  of Mathematics, University of Central Florida, Orlando, FL 32816, USA. Supported in part by NSF grant DMS-2153945.}\hskip 1cm Zi-Xia Song\thanks{Department  of Mathematics, University of Central Florida, Orlando, FL 32816, USA. Supported by  NSF grant    DMS-2153945. Email: Zixia.Song@ucf.edu.}}
\date{October 5, 2022}
\begin{document}
\maketitle
 
\begin{abstract}
 
 For positive integers $t$ and $s$, let $\mathcal{K}_t^{-s}$ denote the family of graphs obtained from the complete graph $K_t$ by removing $s$ edges. A graph $G$ has no  $\mathcal{K}_t^{-s}$ minor  if it has no $H$ minor for every $H\in  \mathcal{K}_t^{-s}$. Motivated by  the famous Hadwiger's Conjecture,  Jakobsen in 1971 proved that every   graph  with no   $\mathcal{K}_7^{-2}$   minor is $6$-colorable; very recently the present authors  proved that every graph with no $\mathcal{K}_8^{-4}$ minor is $7$-colorable. In this paper we  continue our work and prove that every graph with no $\mathcal{K}_9^{-6}$ minor  is $8$-colorable. Our result  implies that $H$-Hadwiger's Conjecture, suggested by Paul Seymour in 2017,  is true for all graphs $H$ on nine vertices such that $H$ is  a subgraph of  every graph in $ \mathcal{K}_9^{-6}$.
\end{abstract}

\baselineskip 16pt

\section{Introduction}

All graphs in this paper are finite and undirected, and have no loops or parallel edges.   For a graph $G$ we use $|G|$, $e(G)$, $\delta (G)$, $\Delta(G)$, $\alpha(G)$, $\chi(G)$ to denote the number
of vertices, number of edges,   minimum degree, maximum degree,  independence number, and chromatic number  of $G$, respectively.  The \dfn{complement} of $G$ is denoted by $\overline{G}$.  For any positive integer k, we define   $[k]$ to be the set $\{1, \ldots, k\}$. A graph  $H$ is a \dfn{minor} of a graph $G$ if  $H$ can be
 obtained from a subgraph of $G$ by contracting edges.  We write $G\se H$ if 
$H$ is a minor of $G$.
In those circumstances we also say that  $G$ has an  \dfn{$H$ minor}.  For positive integers  $t, s$, we use $\mathcal{K}_t^{-s}$ to denote  the family of graphs obtained from the complete graph $K_t$ by deleting $s$ edges. We use $K_t^-$, $K_t^=$, and $K_t^\equiv$ to denote the unique graph  obtained from $K_t$ by deleting one, two and three  independent edges, respectively; and 
  $K_t^<$ to denote the unique graph obtained from $K_t$ by deleting two   adjacent edges. Note that $\mathcal{K}_t^{-1}=\{K_t^-\}$ and $\mathcal{K}_t^{-2}=\{K_t^=, K_t^<\}$. 
A graph $G$ has \dfn{no  $\mathcal{K}_t^{-s}$ minor}  if it has no $H$ minor for every $H\in  \mathcal{K}_t^{-s}$; and $G$ has a  $\mathcal{K}_t^{-s}$ minor, otherwise. We write $G\se \mathcal{K}_t^{-s}$ if 
$G$ has a  $\mathcal{K}_t^{-s}$ minor.\medskip

Our work is motivated by   Hadwiger's Conjecture~\cite{Had43}, which is perhaps the most famous conjecture in graph theory.

\begin{conj}[Hadwiger's Conjecture~\cite{Had43}]\label{HC} Every graph with no $K_t$ minor is $(t-1)$-colorable. 
\end{conj}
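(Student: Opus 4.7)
The statement is Hadwiger's Conjecture, which is famously open for every $t \geq 7$; accordingly, what follows can only be a plan in the spirit of the known partial results, and I must make explicit where it breaks down. I would proceed by induction on $t$. The base cases are well understood: $t \leq 4$ is classical ($K_4$-minor-free graphs are series--parallel and hence $3$-colorable), $t = 5$ was shown by Wagner to be equivalent to the Four Color Theorem via his structural description of $K_5$-minor-free graphs, and $t = 6$ was established by Robertson, Seymour, and Thomas by reducing to an apex configuration and invoking the Four Color Theorem. Thus the substantive task is the inductive step for $t \geq 7$.

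For the inductive step I would take a counterexample $G$ minimizing $|V(G)|$. Then $G$ is $t$-chromatic-critical, so $\delta(G) \geq t - 1$, and $G$ is $(t-1)$-contraction-critical in the sense that $\chi(G/e) \leq t - 1$ for every edge $e$. First I would extract quantitative structural consequences: Kempe-chain arguments on a minimum-degree vertex $v$ force $G[N(v)]$ to be dense and force all $t-1$ colors in any proper $(t-1)$-coloring of $G - v$ to appear on $N(v)$, while Mader-type theorems on contraction-critical graphs bound $\kappa(G)$ from below. I would combine these with the Kostochka--Thomason extremal bound, which says that the absence of a $K_t$ minor forces the average degree of $G$ to be $O(t\sqrt{\log t})$, in order to constrain $G$ enough to build the branch sets of a $K_t$ minor directly inside the dense neighborhood of~$v$.

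A deeper attack would invoke the Robertson--Seymour graph minor structure theorem to decompose $G$ into almost-embeddable pieces joined along small separators, then $(t-1)$-color each piece by a planar or bounded-genus argument and glue the colorings along the tree-decomposition. This is essentially the route successfully pursued at $t = 6$, leveraging the Four Color Theorem as a black box for the near-planar pieces.

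The main obstacle is precisely the one that has blocked every serious attempt since 1943: for $t \geq 7$ no known mechanism closes the gap between the $O(t\sqrt{\log t})$ average-degree ceiling forced by Kostochka--Thomason and the sharper $(t-1)$-coloring target, and no analogue of the Robertson--Seymour--Thomas reduction to the Four Color Theorem is available once the apex vertices and vortices in the structure theorem become more intricate. This is exactly why the present paper abandons the bare $K_9$ minor in favor of the richer family $\mathcal{K}_9^{-6}$: forbidding every graph obtained from $K_9$ by deleting six edges yields substantially more minor-containment structure than forbidding $K_9$ alone, and lets the authors attack the $t = 9$ case indirectly by leveraging their earlier $\mathcal{K}_8^{-4}$ theorem through detailed case analysis on a minimum-degree neighborhood, sidestepping the barrier that still obstructs a direct assault on Hadwiger's Conjecture itself.
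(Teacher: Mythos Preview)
Your assessment is correct: the statement is Hadwiger's Conjecture itself, stated in the paper as \cref{HC}, and the paper does \emph{not} prove it. It is presented purely as motivation, and the surrounding text explicitly says that the conjecture ``remains wide open for all $t\ge 7$.'' There is therefore no proof in the paper to compare your proposal against.

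Your write-up handles this appropriately. You correctly flag that no complete proof is possible with current tools, accurately summarize the known cases ($t\le 4$ elementary, $t=5$ via Wagner plus the Four Color Theorem, $t=6$ via Robertson--Seymour--Thomas), and identify the genuine obstruction for $t\ge 7$: the gap between the Kostochka--Thomason extremal bound and the target chromatic number, and the lack of a structural reduction analogous to the apex reduction at $t=6$. Your closing paragraph also correctly characterizes the paper's strategy of passing to the family $\mathcal{K}_9^{-6}$ in order to gain enough structure to push through an argument that is out of reach for $K_9$ alone. One small refinement: the paper's attack on $\mathcal{K}_9^{-6}$ does not literally leverage the earlier $\mathcal{K}_8^{-4}$ theorem as a black box; rather, it reuses the same machinery (the extremal function of \cref{t:exfun}, generalized Kempe chains via \cref{l:wonderful} and \cref{l:rootedK4}, and the three-clique minor-building method of \cref{l:threek6s}) adapted to the next value of~$t$.
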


\cref{HC} is  trivially true for $t\le3$, and reasonably easy for $t=4$, as shown independently by Hadwiger~\cite{Had43} and Dirac~\cite{Dirac52}. However, for $t\ge5$, Hadwiger's Conjecture implies the Four Color Theorem~\cite{AH77,AHK77, RSST97}.   Wagner~\cite{Wagner37} proved that the case $t=5$ of Hadwiger's Conjecture is, in fact, equivalent to the Four Color Theorem, and the same was shown for $t=6$ by Robertson, Seymour and  Thomas~\cite{RST}. Despite receiving considerable attention over the years, Hadwiger's Conjecture remains wide open for all $t\ge 7$,  and is    considered among the most important problems in graph theory and has motivated numerous developments in graph coloring and graph minor theory.   
   K\"{u}hn  and Osthus~\cite{KuhOst03c} proved that Hadwiger's Conjecture is true for $C_4$-free graphs of sufficiently large chromatic number,  and for all graphs of girth at least $19$.     Until very recently  the best known upper bound on the chromatic number of graphs with no $K_t$ minor  is $O(t (\log t)^{1/2})$,  obtained independently by Kostochka~\cite{Kostochka82,Kostochka84} and Thomason~\cite{Thomason84}, while  Norin, Postle and the second  author~\cite{NPS20} improved    the frightening $(\log t)^{1/2}$ term to $(\log t)^{1/4}$. The current record 
 is $O(t\log \log t)$ due to
Delcourt and Postle~\cite{DelcourtPostle}.   \medskip

Given the notorious difficulty of Hadwiger's Conjecture, Paul Seymour in 2017 suggested the study of the following $H$-Hadwiger's Conjecture. 

\begin{conj}[$H$-Hadwiger's Conjecture]\label{HHC} For every graph $H$ on $t$ vertices, every graph with no $H$ minor is $(t-1)$-colorable. 
\end{conj}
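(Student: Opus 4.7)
Since $H$ has exactly $t$ vertices, $H$ is a subgraph of $K_t$, so any graph $G$ with $G\se K_t$ automatically has $G\se H$. Contrapositively, every graph with no $H$ minor has no $K_t$ minor. Hence \cref{HHC} for arbitrary $H$ on $t$ vertices is logically equivalent to \cref{HC}: one direction is the immediate deduction just sketched, the other is the special case $H=K_t$. A direct, uniform proof of \cref{HHC} for all $H$ would therefore be a proof of Hadwiger, which is wide open for every $t\ge 7$. My plan is not to attack \cref{HHC} monolithically but to prove it graph-by-graph (or family-by-family, as in the paper's treatment of $\kns$), in the spirit of Jakobsen's resolution for $\mathcal{K}_7^{-2}$ and the authors' earlier theorem for $\mathcal{K}_8^{-4}$, incrementally enlarging the set of $H$'s for which the $(t-1)$-coloring statement is known.

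For a fixed $H$ on $t$ vertices I would argue by contradiction on a vertex-minimum counterexample $G$: $\chi(G)\ge t$ and $G$ has no $H$ minor. Standard criticality reductions yield $\delta(G)\ge t-1$ and high connectivity, while the Kostochka--Thomason extremal bound forces $|G|$ to be reasonably large and supplies a $K_{t-p}$ subgraph $M$ (with $p$ chosen to match, roughly, the number of non-edges of $H$). The core step is to exhibit an $H$ minor inside $G$, which amounts to producing $t$ disjoint connected branch sets realizing the adjacency pattern of $H$. Because $H$ omits certain edges of $K_t$, one has a prescribed set of \emph{permitted} non-adjacencies between branch sets, and exploiting this slack is the entire game: analyze the common and private neighborhoods of $M$ in $G-M$, separate off a small ``apex'' structure via a minimum cut, and contract the remaining components onto $M$ along suitable paths so that the resulting quotient contains $H$. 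Local completion lemmas (typically saying that $\alpha(N(v))$ must be small or else one immediately finds a graph in $\mathcal{K}_t^{-s}$ as a minor) then close the contradiction.

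The main obstacle is that the slack afforded by the non-edges of $H$ shrinks as $H$ approaches $K_t$. When $H$ has only a handful of missing edges, the adjacency/non-adjacency bookkeeping collapses essentially to finding a full $K_t$ minor, and the method then has to resolve Hadwiger itself; so a uniform proof across all $H$ on $t$ vertices is not to be expected without a fundamental breakthrough. What is realistic is to extend the $H$-by-$H$ program by enlarging the family $\mathcal{K}_t^{-s}$ one can handle (for example, attacking $\mathcal{K}_9^{-s}$ for $s<6$ after the present paper's $\kns$ result) and then incrementing $t$. The hardest step in each individual instance -- and the one where the real structural work lives -- is ruling out tight extremal configurations in which $G$ is exactly $(t-1)$-connected, $\delta(G)$ only marginally exceeds $t-1$, and the neighborhoods of high-degree vertices have just enough independence to block every prescribed minor; this is unavoidable and $H$-specific, which is precisely why the literature accumulates as a sequence of increasingly delicate case analyses rather than as a single omnibus theorem.
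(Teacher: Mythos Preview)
The statement you were asked to address is \cref{HHC}, which is a \emph{conjecture}, not a theorem; the paper does not prove it and does not claim to. You correctly observe that \cref{HHC} taken over all $H$ on $t$ vertices is logically equivalent to Hadwiger's Conjecture (\cref{HC}), and hence is open for every $t\ge 7$. There is therefore no ``paper's own proof'' to compare against: the paper's contribution is \cref{t:main}, which establishes \cref{HHC} only for those $H$ on nine vertices that are subgraphs of every member of $\kns$.

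Your write-up is not a proof and, to your credit, does not pretend to be one; it is a research outline for attacking special cases. That outline is broadly consonant with the paper's actual method (contraction-criticality, an extremal edge bound, local neighborhood structure, and building the forbidden minor via disjoint paths through multiple cliques). A couple of details in your sketch drift from what is really used: the connectivity one can extract from $k$-contraction-criticality is currently only $7$-connectedness (Mader, \cref{t:7conn}), not the ``high connectivity'' your language suggests; and the engine here is not a Kostochka--Thomason-type bound but a sharp extremal function (\cref{t:exfun}) tailored to $\kns$. These are the places where the real work happens in the paper, so if you intend to push the program to smaller $s$ or larger $t$, the corresponding extremal functions and the three-clique lemma (\cref{l:threek6s}) are what would need to be strengthened, not the general degeneracy bounds.
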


 Jakobsen~\cite{Jakobsen71b} in 1971 proved that every graph with no $K_7^{-}$ minor is  $7$-colorable. It  is not   known yet whether  every graph with no $K_7$ minor is $7$-colorable;  some progress has been  made in \cite{RST22}.  For $H\in\{K_7^-, K_7^=, K_7^<\}$, proving that graphs with no $H$ minor are  $6$-colorable also remains open.   Kostochka~\cite{Kos14}  proved that   $H$-Hadwiger's Conjecture is true for graphs with no $K_{s,t}$ minor, provided that   $t>C(s\log s)^3$. Very recently, Norin and Seymour~\cite{NorSey22} proved that every graph on $n$ vertices with independence number two has an $H$ minor, where $H$  is a graph with  $\lceil n/2\rceil$ vertices and at least $  0.98688\cdot {{|H|}\choose2}-o(n^2)$ edges. We refer the reader to   a recent  paper of  the present  authors~\cite{K84} on partial results towards Hadwiger's Conjecture for $t\le 9$; and  recent surveys~\cite{CV2020, K2015,Seymoursurvey} for further  background on Hadwiger's Conjecture. \medskip

 Dirac  in 1964 began the study of a variation of $H$-Hadwiger's Conjecture in~\cite{Dirac64b} by  excluding  more than one forbidden minor simultaneously; he proved that every graph with no $\mathcal{K}_t^{-2}$ minor is  $(t-1)$-colorable for each $t\in\{5,6\}$. 
Jakobsen~\cite{Jakobsen71a} in 1971 proved that every graph with no $\mathcal{K}_7^{-2}$ minor is  $6$-colorable;   this implies that $H$-Hadwiger's Conjecture  is true  for all graphs $H$ on seven vertices such that $\Delta(\overline{H})\ge2$ and $\overline{H}$ has a matching of size two.  \medskip   
 
Very recently,  using the techniques developed in \cite{ KT05,RST} and  generalized Kempe chains of contraction-critical graphs    by Rolek  and the second author~\cite{RolekSong17a}, the present authors considered the case when $t=8$ and proved the following result. 

\begin{thm}[Lafferty and Song~\cite{K84}]\label{t:K84} Every graph with no $\mathcal{K}_8^{-4}$ minor is  $7$-colorable.  In particular,   $H$-Hadwiger's Conjecture  is true for all graphs $H$ on eight vertices such that  $\Delta(\overline{H})\ge4$,  and $\overline{H}$ has a perfect matching,  a triangle and a cycle of length four.
\end{thm}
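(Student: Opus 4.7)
My plan is to argue by contradiction, taking a vertex-minimum counterexample $G$, so that $\chi(G)\ge 8$, $G\not\succeq \mathcal{K}_8^{-4}$, and every proper subgraph of $G$ is $7$-colorable. Standard critical-graph arguments then give $\delta(G)\ge 7$. Since $K_8^-\in \mathcal{K}_8^{-4}$, the exclusion of the whole family forbids $K_8^-$ as a minor in particular, which together with minimality should let me conclude that $G$ is $7$-connected and contraction-critical, meaning that contracting any edge produces a $7$-colorable graph.

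Next I would use a Mader-type edge bound: excluding a $\mathcal{K}_8^{-4}$ minor (which is much weaker than excluding $K_8$) yields $e(G)\le c\,|G|$ for an appropriate constant $c$, and combined with $\delta(G)\ge 7$ this forces many vertices $v$ of degree exactly $7$ whose neighborhoods $N(v)$ span dense subgraphs. The aim for such a vertex is to show that either $G[N(v)]$ is so close to $K_7$ that $\{v\}\cup N(v)$, augmented by a single contracted branch set drawn from $G\setminus N[v]$, already realizes a member of $\mathcal{K}_8^{-4}$ as a minor, or else the bounded number of missing edges in $G[N(v)]$ can be repaired via Kempe chains.

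The heart of the argument is then a case analysis on the isomorphism type of $\overline{G[N(v)]}$, implemented via generalized Kempe chains in the sense of Rolek--Song~\cite{RolekSong17a}. Fix a proper $7$-coloring of $G-v$; for each pair of nonadjacent vertices $x,y\in N(v)$ whose colors are $\{i,j\}$, the $(i,j)$-Kempe component in $G-v$ meeting both $x$ and $y$ can be contracted to produce an edge between the branch sets containing $x$ and $y$. By iterating such contractions and combining them with the fragment/apex techniques of Kawarabayashi--Toft~\cite{KT05} and Robertson--Seymour--Thomas~\cite{RST}, I expect either to assemble eight branch sets realizing some $H\in \mathcal{K}_8^{-4}$ as a minor of $G$, or to expose a separator of order at most $6$ that contradicts the $7$-connectivity of $G$.

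The main obstacle I anticipate is the residual configurations in which $\overline{G[N(v)]}$ contains three or four edges in ``bad'' patterns such as a triangle plus an isolated edge or two vertex-disjoint paths of length two: here the Kempe chains may fail to deliver the needed adjacencies simultaneously, and one has to recontract along carefully chosen substructures of $G\setminus N[v]$ so as to exploit the contraction-critical hypothesis without creating a small separator. Once these cases are dispatched, the ``in particular'' statement about $H$-Hadwiger's Conjecture follows immediately, because any such $H$ on eight vertices is a subgraph of every member of $\mathcal{K}_8^{-4}$, and so the absence of an $H$ minor implies the absence of a $\mathcal{K}_8^{-4}$ minor and hence $7$-colorability by the main statement.
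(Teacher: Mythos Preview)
This theorem is not proved in the present paper; it is quoted from \cite{K84} as prior work, so there is no ``paper's own proof'' here to compare against directly. What the paper does prove is the analogous statement for $\mathcal{K}_9^{-6}$ (Theorem~\ref{t:main}), and since \cite{K84} is by the same authors using the same machinery, that proof is the relevant template.

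Against that template your sketch has the right ingredients but two substantive gaps. First, the focus on vertices of degree exactly~$7$ is misplaced. In an $8$-contraction-critical graph, \cref{l:alpha2} gives $\alpha(G[N(v)])\le d(v)-6$; so if $d(v)=7$ then $G[N(v)]$ is a clique and $G[N[v]]=K_8$ is already a $\mathcal{K}_8^{-4}$ minor, an immediate contradiction. Hence $\delta(G)\ge 8$, and the extremal function is needed to pin $\delta(G)$ at~$8$; the nontrivial case analysis is on the $9$-vertex neighbourhood $G[N(v)]$ of an $8$-vertex $v$ with $\alpha(G[N(v)])=2$, exactly parallel to \cref{l:H9} here. (Incidentally, $K_8^-\notin\mathcal{K}_8^{-4}$; your conclusion that $G$ has no $K_8^-$ minor is correct, but it follows because every member of $\mathcal{K}_8^{-4}$ is a subgraph of $K_8^-$, not because $K_8^-$ lies in the family.)

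Second, and more importantly, you omit the decisive global step. In the proof of \cref{t:main} the Kempe-chain machinery (\cref{l:wonderful}, \cref{l:rootedK4}) is used only to force a $K_5$ inside $G[N(v)]$ for each minimum-degree vertex $v$; it does not by itself produce the target minor. The minor is then obtained by counting to find three pairwise nonadjacent minimum-degree vertices, extracting three $6$-cliques through them, and invoking a three-clique lemma (\cref{l:threek6s}, built on \cref{t:goodpaths}) to thread seven disjoint paths among these cliques in the $7$-connected graph. Your plan to build the minor locally from a single $N[v]$ plus Kempe chains is exactly what the ``bad'' configurations you anticipate will block; the paper's route sidesteps that by combining information from three different low-degree vertices.
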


The purpose of this paper is to  consider the next step and prove the following main result.

\begin{restatable}{thm}{main}\label{t:main} Every graph with no $\kns$ minor is $8$-colorable.     \end{restatable}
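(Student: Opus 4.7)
The plan is to argue by contradiction with a vertex-minimum counterexample $G$ to \cref{t:main}. Then $G$ is $8$-contraction-critical, so by classical results going back to Dirac~\cite{Dirac64b} we have $\delta(G)\ge 8$; the density bounds of Kostochka~\cite{Kostochka82,Kostochka84} and Thomason~\cite{Thomason84} force $|G|$ to be not too small, and the contraction-critical framework behind \cref{t:K84} (see also~\cite{RolekSong17a}) lets us assume strong connectivity hypotheses on $G$ and on the neighborhoods of its low-degree vertices.

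Next I would fix a vertex $v\in V(G)$ of minimum degree and analyze $H:=G[N(v)]$. The generalized Kempe-chain machinery of Rolek and Song~\cite{RolekSong17a} shows that an $8$-coloring of $G-v$ extends to an $8$-coloring of $G$ whenever the color classes on $N(v)$ can be suitably swapped through the rest of $G$; since $G$ is a counterexample, every such recoloring must fail. Translating this into structural information about $H$ gives strong density lower bounds on $H$ and sharp restrictions on its independent sets, mirroring the structural picture that drove \cref{t:K84}.

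With these structural properties in hand, the core step is to construct a $\kns$ minor in $G$. The natural starting point takes $\{v\}$ together with the eight singletons $\{u\}$ for $u\in N(v)$, producing a minor of $K_9$ missing exactly the edges of $\overline{H}$. The task is therefore to reduce the number of missing edges to at most $6$ by routing vertex-disjoint paths through $V(G)\setminus(\{v\}\cup N(v))$ between pairs of non-adjacent vertices of $N(v)$: each such path is absorbed into one of the two singleton branch sets it connects, turning that pair into an edge of the minor. The high connectivity of $G$ combined with Menger-type arguments should supply enough paths to handle the less dense configurations of $\overline{H}$.

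The main obstacle will be the dense cases, in which $\overline{H}$ carries many edges and is highly structured, such as subgraphs resembling $C_8$, $2C_4$, or a subgraph of $K_{4,4}$. In such configurations the vertex-disjoint paths needed to eliminate all but six missing edges may not be simultaneously available in $G\setminus(\{v\}\cup N(v))$. To handle them I expect to mirror the strategy of \cref{t:K84}: perform a finer case analysis on the isomorphism type of $\overline{H}$, and in each case combine the resulting structural constraints with Kempe swaps along the paths supplied by connectivity to either force an $8$-coloring of $G$ (contradicting the choice of $G$) or exhibit a $\kns$ minor directly. The extra slack of six missing edges, rather than four as in \cref{t:K84}, should make the path-routing steps more forgiving, but the number of cases and the delicacy of the Kempe arguments will be the principal technical challenge.
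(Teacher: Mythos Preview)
Your outline has several genuine gaps that prevent it from reaching the conclusion.

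First, a minimal counterexample is $9$-contraction-critical, not $8$-contraction-critical: a graph that is not $8$-colorable has $\chi\ge 9$. This matters immediately, because Dirac's lemma (\cref{l:alpha2}) then gives $\alpha(G[N(v)])\le d(v)-7$; so if $d(v)=8$ the neighborhood is a clique and $G\supseteq K_9$, a contradiction. Hence $\delta(G)\ge 9$, and your ``eight singletons in $N(v)$'' picture never arises.

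Second, you need an \emph{upper} bound on $\delta(G)$, and the asymptotic Kostochka--Thomason bounds are far too weak. The paper proves and uses the exact extremal function \cref{t:exfun}: any graph on $n\ge 9$ vertices with $5n-14$ edges already has a $\kns$ minor. This forces $\delta(G)\le 9$, pinning $\delta(G)=9$ and $\alpha(G[N(v)])=2$. Without this ingredient you cannot control $d(v)$, and the neighborhood analysis has no fixed target.

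Third, and most importantly, the single-vertex strategy you describe is not how the paper finishes, and it is unclear it can finish on its own. With $|N(v)|=9$ and $\alpha(G[N(v)])=2$, the complement $\overline{H}$ is triangle-free and may have up to $20$ edges, so ``route disjoint paths to kill all but six missing edges'' would require producing many vertex-disjoint paths simultaneously; Lemmas~\ref{l:wonderful} and~\ref{l:rootedK4} only guarantee paths forming a very restricted pattern (a $C_4$ plus stars), not an arbitrary matching of that size. The paper instead uses the neighborhood analysis for a much more limited goal: to show $G[N(v)]$ contains $K_5$. This is done via a structural lemma (\cref{l:H9}) classifying the $K_5$-free, $\alpha=2$ graphs on nine vertices into six explicit types $H_1,\dots,H_6$, each of which is dispatched by a single application of \cref{l:rootedK4}. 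Once every $9$-vertex lies in a $6$-clique, the extremal function is reused to count at least thirty $9$-vertices, hence three pairwise nonadjacent ones, and the proof concludes with a separate ``three $6$-cliques'' lemma (\cref{l:threek6s}, built on \cref{t:goodpaths}) that manufactures a $\kns$ minor from three such cliques in a $7$-connected graph. Your plan omits both \cref{t:exfun} and the three-clique mechanism, which are the two load-bearing pieces of the argument.
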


 \cref{t:main} implies that $H$-Hadwiger's Conjecture holds for all graphs $H$ on nine vertices such that $H$ is a subgraph of  every graph in $ \kns$.  Following the ideas in \cite{K84}, our proof of \cref{t:main} utilizes  an extremal function for $\kns$ minors (see \cref{t:exfun}),   generalized Kempe chains of contraction-critical graphs   (see \cref{l:wonderful}), and the method   for finding $\kns$ minors from three different   $K_6$ subgraphs   in $7$-connected graphs on at least $19$ vertices (see \cref{l:threek6s}).

\begin{restatable}{thm}{exfun}\label{t:exfun} Every graph on $n\ge 9$ vertices with at least $5n-14$ edges has a $\kns$ minor. \end{restatable}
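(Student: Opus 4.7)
The plan is to argue by induction on $n$. For the base case $n = 9$, we have $e(G) \geq 31 = \binom{9}{2} - 5$, so $G$ arises from $K_9$ by deleting at most five edges; removing one further edge yields a spanning subgraph lying in $\kns$, and hence $G$ contains a member of $\kns$ as a subgraph (and therefore as a minor).

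For the inductive step, let $n \geq 10$ and suppose the result holds for $n - 1$. Let $G$ be a counterexample with $|V(G)| = n$ and $e(G)$ minimum. Two immediate reductions yield structural constraints. First, $\delta(G) \geq 6$: if some $v$ had $\deg(v) \leq 5$, then $G - v$ would have $n - 1 \geq 9$ vertices and at least $5n - 14 - 5 = 5(n-1) - 14$ edges, so by induction $G - v$ (and hence $G$) would have a $\kns$ minor, a contradiction. Second, every edge $uv$ satisfies $|N(u) \cap N(v)| \geq 5$, since otherwise the contraction $G/uv$ would have $n - 1$ vertices and at least $e(G) - 1 - 4 = e(G) - 5 \geq 5(n-1) - 14$ edges, again yielding a $\kns$ minor by induction.

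The heart of the proof, and the anticipated main obstacle, is to extract a $\kns$ minor from these local-density conditions. A natural first step is to locate a $K_7$ subgraph of $G$: for any vertex $v$ of minimum degree, $G[N(v)]$ has minimum degree at least $5$, and in the extreme case $\deg(v) = 6$ this forces $G[N(v)] \cong K_6$ and hence $G[N[v]] \cong K_7$. Given such a $K_7$ on a vertex set $S$, since any graph in $\kns$ has $30$ edges and the $K_7$ already supplies $\binom{7}{2} = 21$, we need only augment $S$ by two connected branch sets $B_8, B_9 \subseteq V(G) \setminus S$ realizing at least $9$ of the $15$ remaining adjacencies. The delicate cases will be when $\delta(G)$ is much larger than $6$ (so no $K_7$ subgraph is immediate) or when vertices of $V(G) \setminus N[v]$ connect sparsely to $N[v]$. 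To handle them, I would study the edges between $S$ and $V(G) \setminus S$ using Mader's bound $\ex(m, K_7) \leq 5m - 15$ applied to suitable subgraphs, together with Menger-style connectivity arguments inside the still-dense $G$, following the spirit of the case analyses in the extremal-function proofs for $K_7$ and $K_8$ minors due to Mader and J{\o}rgensen.
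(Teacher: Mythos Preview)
Your setup matches the paper exactly: the base case, the reduction to $\delta(G)\ge 6$, and the observation that every edge lies in at least five triangles are the paper's first two claims. The gap is everything after that. Your plan hinges on locating a $K_7$ subgraph, but a minimal counterexample need not hand you one: since $e(G)=5n-14$ the minimum degree may be as large as $9$, and for a $9$-vertex $x$ you only learn that $G[N(x)]$ has nine vertices and minimum degree at least $5$, which is far from forcing a $K_6$ inside it. In fact the paper proves that a minimal counterexample has \emph{no} vertices of degree $7$ or $8$ and at most two of degree $6$; so the ``extreme case $\deg(v)=6$'' you rely on is essentially vacuous, and you are left precisely in the situation you yourself flag as ``delicate'' with no concrete strategy for it.

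The paper's actual argument is substantially longer and different in kind. After your two reductions it carries out a detailed separator analysis (using J\o rgensen's lemma on $4$-separators, \cref{l:k4-}, together with an edge-counting inequality comparing the two sides of a separation) to establish $5$-connectivity and to rule out pieces of size exactly seven or eight on either side of a small cut. From $5$-connectivity it deduces that $G$ is $\ket$-free, which immediately eliminates $7$-vertices. A separate structural lemma (\cref{l:computer}: every $8$- or $9$-vertex graph with $\delta\ge5$ has a vertex whose deletion still leaves a $\km{7}{6}$ minor) is then used to show that for every $8$- or $9$-vertex $x$ the graph $G\setminus N[x]$ must be disconnected, and this in turn eliminates $8$-vertices. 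The endgame is a minimal-component descent among $9$-vertices: one picks a $9$-vertex $x$ and a smallest component $C$ of $G\setminus N[x]$, shows $C$ must itself contain a $9$-vertex $y$, and reaches a contradiction by examining the components of $G\setminus N[y]$. None of these ingredients---the separator/connectivity analysis, the $\km{7}{6}$-in-neighborhood lemma, the disconnection claim, or the descent---is present in your outline, and the generic appeal to Mader's $K_7$ extremal function and Menger is too vague to stand in for them.
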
 \medskip

\cref{t:exfun} is best possible in the sense that  every  $(K_8^=, 4)$-cockade on $n$ vertices has   $5n-14$ edges  but  no $\knf$ minor, where  for a graph $H$ and an integer $k\ge1$, an  $(H, k)$-cockade is defined recursively as follows: any graph isomorphic to $H$  
is an  $(H,k)$-cockade. Let $G_1$ and  $G_2$ be $(H, k)$-cockades and let $G$
be obtained from the disjoint union of $G_1$ and $G_2$ by identifying a clique
of size $k$ in $G_1$ with a clique of the same size in $G_2$. Then the graph
$G$ is also an $(H,k)$-cockade, and every $(H,k)$-cockade can be constructed
this way.  \medskip

This paper is organized as follows. In the next section, we introduce the necessary definitions and collect several tools which we will need later on.  We prove \cref{t:main} in Section~\ref{s:coloring}, and  \cref{t:exfun}  in Section~\ref{s:exfun}. 
 
 \section{Notation and tools}

 Let $G$ be a graph.   If $x,y$ are adjacent
vertices of   $G$, then we denote by $G/xy$ the graph obtained from $G$
by contracting the edge $xy$ and deleting all resulting parallel
edges. We simply write $G/e$ if $e=xy$.  If $u,v$ are distinct nonadjacent vertices of   $G$, then by
$G+uv$ we denote the graph obtained from $G$ by adding an edge
with ends $u$ and $v$.  If $u,v$ are adjacent or equal, then we define
$G+uv$ to be $G$.  Similarly, if  $M\subseteq E(G)\cup E(\overline{G})$, then   by
$G+M$ we denote the graph obtained from $G$ by adding  all the edges of $M$ to $G$.  Every edge in $\overline{G}$  is  called a \dfn{missing edge} of $G$. For a vertex $x\in V(G)$, we will use $N(x)$ to denote the set of vertices in $G$ which are adjacent to $x$.
We define $N[x] = N(x) \cup \{x\}$.  The degree of $x$ is denoted by $d_G(x)$ or
simply $d(x)$.   If  $A, B\subseteq V(G)$ are disjoint, we say that $A$ is \emph{complete} to $B$ if each vertex in $A$ is adjacent to all vertices in $B$, and $A$ is \emph{anticomplete} to $B$ if no vertex in $A$ is adjacent to any vertex in $B$.
If $A=\{a\}$, we simply say $a$ is complete to $B$ or $a$ is anticomplete to $B$. We use $e(A, B)$ to denote the number of edges between $A$ and $B$ in  $G$. 
The subgraph of $G$ induced by $A$, denoted by $G[A]$, is the graph with vertex set $A$ and edge set $\{xy \in E(G) \mid x, y \in A\}$. We denote by $B \less A$ the set $B - A$,   and $G \less A$ the subgraph of $G$ induced on $V(G) \less A$, respectively. 
If $A = \{a\}$, we simply write $B \less a$    and $G \less a$, respectively.
An $(A, B)$-path in $G$ is a path with one end in $A$ and  the other in $B$ such that  all its internal vertices lie  in $G\less (A\cup B)$. We simply say an $(a, B)$-path if $A=\{a\}$. It is worth noting that each vertex in $A \cap B$ is an $(A, B)$-path.
  For a positive integer $k$,  a $k$-vertex is a vertex of degree $k$, and a $k$-clique   is a set of $k$ pairwise adjacent vertices.     
 Let $\mathcal{F}$ be a family of graphs. A graph $G$ is \emph{$\mathcal{F}$-free} if it has no subgraph isomorphic to $H$ for every    $H\in\mathcal{F}$. We simply say $G$ is $H$-free if  $\mathcal{F}=\{H\}$.  The \dfn{join} $G+H$ (resp. \dfn{union} $G\cup H$) of two 
vertex-disjoint graphs
$G$ and $H$ is the graph having vertex set $V(G)\cup V(H)$  and edge set $E(G)
\cup E(H)\cup \{xy\, |\,  x\in V(G),  y\in V(H)\}$ (resp. $E(G)\cup E(H)$).  
We use the convention   ``$A:=$"  to mean that $A$ is defined to be
the right-hand side of the relation.
Finally, if $H$ is a connected subgraph of a graph $G$ and $y \in V(H)$, we say that we \textit{contract $H \less y$ onto $y$} when we contract $H$ to a single vertex, that is, contract all the edges of $H$.    \medskip

 To prove \cref{t:main}, we need to investigate the basic properties of contraction-critical graphs.  For a positive integer $k$, a graph $G$ is \dfn{$k$-contraction-critical} if $\chi(G)=k$ and every proper minor of $G$ is $(k-1)$-colorable.   
   Dirac~\cite{Dirac60} introduced the notion of contraction-critical graphs and proved \cref{l:alpha2} below; in the same paper he also proved that $5$-contraction-critical graphs are $5$-connected. The latter was then extended by Mader~\cite{7con} as stated in \cref{t:7conn}. It remains unknown whether every $k$-contraction-critical graph is $8$-connected for all $k\ge8$.

\begin{lem}[Dirac~\cite{Dirac60}]\label{l:alpha2}   Let $G$ be a  $k$-contraction-critical graph. Then for each  $v\in V(G)$, \[\alpha(G[N(v)])\le d(v)-k+2.\] 
\end{lem}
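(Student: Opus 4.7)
The plan is to argue by contradiction. Suppose there exists $v\in V(G)$ with an independent set $S\subseteq N(v)$ of size $|S|\ge d(v)-k+3$. I would exploit the fact that the star $T$ consisting of $v$ together with the edges $vs$ for $s\in S$ is a connected subgraph of $G$; contracting $T$ to a single vertex $v^\ast$ yields a proper minor $G'$ of $G$ (note $|S|\ge 1$ is forced by the assumption, and in any case a star on $\ge 2$ vertices reduces the vertex count). By the $k$-contraction-criticality of $G$, the graph $G'$ admits a proper $(k-1)$-coloring $c\colon V(G')\to[k-1]$.

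Next I would lift $c$ to a proper coloring $c'$ of $G$ as follows: set $c'(u):=c(u)$ for every $u\in V(G)\setminus(\{v\}\cup S)$, assign $c'(s):=c(v^\ast)$ for every $s\in S$ (which is consistent since $S$ is independent in $G[N(v)]$, so there are no edges within $S$ to violate), and finally choose $c'(v)$ from $[k-1]$ to avoid the colors already used on $N(v)$. The colors appearing on $N(v)$ under $c'$ form a set of size at most
\[
1+|N(v)\setminus S|=1+d(v)-|S|\le 1+d(v)-(d(v)-k+3)=k-2,
\]
so a free color in $[k-1]$ exists for $v$. The remaining edges to check are those incident to $S\cup\{v\}$ with the other endpoint outside $\{v\}\cup S$; but each such edge corresponds to an edge of $G'$ incident to $v^\ast$, and $c$ is proper on $G'$, so $c'$ inherits properness. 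This produces a proper $(k-1)$-coloring of $G$, contradicting $\chi(G)=k$.

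There is essentially no obstacle here beyond bookkeeping: the only things to verify carefully are that the contracted star is a valid proper minor (which forces $|S|\ge 1$, automatic from $d(v)\ge k-2$, itself a consequence of criticality since $\delta(G)\ge k-1$) and that the lifted coloring respects edges crossing the contracted set. Both are immediate. The content of the lemma is really the counting identity $1+(d(v)-|S|)\le k-2$, which is exactly why the threshold in the bound is $d(v)-k+2$.
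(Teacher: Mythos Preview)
Your argument is correct and is the standard proof of this classical fact. Note, however, that the paper does not actually supply a proof of this lemma: it is stated with attribution to Dirac~\cite{Dirac60} and used as a black box. So there is no ``paper's own proof'' to compare against; your write-up simply fills in the well-known argument. One small remark: your bound $\delta(G)\ge k-1$ in fact gives $|S|\ge d(v)-k+3\ge 2$, so the contracted star always has at least three vertices and the minor is genuinely proper.
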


\begin{thm}[Mader~\cite{7con}]\label{t:7conn}  
For all $k \ge 7$, every $k$-contraction-critical graph is $7$-connected.
\end{thm}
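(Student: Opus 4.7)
The plan is to proceed by contradiction. Suppose that $G$ is a $k$-contraction-critical graph with $k\ge 7$ but that $G$ has a vertex cut $S$ of size at most $6$. Choose such a cut $S$ of minimum cardinality, let $C$ be a component of $G\less S$ with $|V(C)|$ as small as possible, and set $A:=V(C)\cup S$ and $B:=V(G)\less V(C)$, so that $A\cap B=S$ and there are no edges between $A\less S$ and $B\less S$. Write $G_A:=G[A]$ and $G_B:=G[B]$. By the minimality of $|S|$, each vertex $s\in S$ has a neighbor in $V(C)$ and a neighbor in $B\less S$, since otherwise $S\less\{s\}$ would already separate a nonempty piece from the rest of $G$, contradicting the choice of $S$.

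The heart of the argument is to realize both \emph{completions} $G_A^{*}:=G_A+E(K_S)$ and $G_B^{*}:=G_B+E(K_S)$ (obtained from $G_A$ and $G_B$ by adding all missing edges inside $S$) as \emph{proper} minors of $G$. To obtain $G_A^{*}$, I would contract $B\less S$ onto $S$ by exhibiting pairwise disjoint connected subgraphs $\{T_s:s\in S\}$ inside $G_B$ with $s\in V(T_s)$ and an edge of $G$ between $V(T_s)$ and $V(T_{s'})$ for every pair $s\neq s'$; symmetrically for $G_B^{*}$. Once this is done, each of $G_A^{*}$ and $G_B^{*}$ is a proper minor of $G$ (we lost at least one vertex of $C$ or of $B\less S$), and so each is $(k-1)$-colorable by contraction-criticality. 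In any $(k-1)$-coloring of $G_A^{*}$ (resp.\ $G_B^{*}$) the set $S$ receives $|S|$ distinct colors because it is a clique, and since $|S|\le 6\le k-1$, I can permute colors in one coloring to make the two agree on $S$; gluing then yields a proper $(k-1)$-coloring of $G$, contradicting $\chi(G)=k$.

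The main obstacle is the minor-realization step: producing a rooted $K_{|S|}$-minor in $G_B$ anchored at $S$ (and the symmetric statement in $G_A$). For $|S|\le 3$ this is immediate from the minimality of $S$ and the connectivity of $B\less S'$ for subsets $S'\subseteq S$. For $|S|\in\{4,5,6\}$, which is the real content, I would invoke the following scheme: iteratively pick a vertex $s_i\in S$, use the minimality of the cut $S$ to find a fan from $s_i$ into $B\less S$ hitting a neighbor of every remaining $s_j$, peel off a connected branch set $T_{s_i}$ containing $s_i$ that still leaves $B\less(S\cup V(T_{s_i}))$ connected and still connected to each remaining $s_j\in S$, and recurse. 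The feasibility of each step uses (i) that $S$ is a \emph{minimum} cut, (ii) the minimality of $|V(C)|$ (which forces the ``large'' side to have plenty of room to absorb branch sets), and (iii) known lower bounds on the connectivity of contraction-critical graphs (Dirac's $5$-connectivity for $k$-contraction-critical graphs, which applies since $k\ge 7\ge 5$). Handling the subcase $|S|=6$ requires the most care, because a bad configuration on the small side could obstruct the rooted minor; in that subcase one argues by a short structural analysis of $G[A]$ using \cref{l:alpha2}, which bounds $\alpha(G[N(v)])$ and thereby forces enough edges among the neighbors of any vertex of $C$ to complete the linkage.
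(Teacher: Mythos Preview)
The paper does not prove this statement; it is quoted as Mader's theorem from~\cite{7con} and used as a black box. So there is no ``paper's own proof'' to compare against, and your proposal is an attempt to reconstruct Mader's argument.

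Your overall framework is the standard one and is correct: take a minimum cut $S$ with $|S|\le 6$, form $G_A^{*}$ and $G_B^{*}$ by completing $S$ to a clique on each side, argue that each is a proper minor of $G$, color both with $k-1$ colors, and glue along $S$. The entire difficulty, however, lies in the step you describe as ``producing a rooted $K_{|S|}$-minor in $G_B$ anchored at $S$'' (and symmetrically in $G_A$), and your treatment of this step is where the proposal has a genuine gap.

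The iterative ``pick $s_i$, find a fan, peel off a connected branch set $T_{s_i}$ that leaves the remainder connected, recurse'' scheme is not a proof: nothing guarantees that after removing $T_{s_i}$ the remaining vertices of $S$ still see a connected piece large enough to continue, and minimality of $|S|$ alone does not supply this. For $|S|\le 4$ one can push such arguments through, and Dirac's $5$-connectivity result handles $|S|\le 4$; but for $|S|\in\{5,6\}$ the rooted complete minor is exactly the nontrivial content of Mader's paper, and it does not follow from fan arguments or from \cref{l:alpha2}. Your appeal to \cref{l:alpha2} in the $|S|=6$ case is unexplained: that lemma bounds $\alpha(G[N(v)])$ for a single vertex $v$, which constrains local edge density but does not by itself produce six disjoint connected branch sets in $G_A$ each containing one vertex of $S$ and pairwise linked. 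Indeed, the analogous statement for $|S|=7$ (which would give $8$-connectivity of $k$-contraction-critical graphs for $k\ge 8$) is open, as the paper notes just before stating \cref{t:7conn}; this is strong evidence that the rooted-minor step is not routine and that your scheme, as written, does not close the argument.
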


  \cref{l:wonderful}  on contraction-critical graphs  turns out to be very powerful, as the existence of pairwise vertex-disjoint paths  is guaranteed without using the connectivity of such  graphs.    Recall that    every edge in $\overline{H}$ is a \dfn{missing edge} of a graph $H$.

   \begin{lem}[Rolek and Song~\cite{RolekSong17a}]\label{l:wonderful} 
Let $G$ be any $k$-contraction-critical graph. Let $x\in V(G)$ be a vertex of
     degree $k + s$ with $\alpha(G[N(x)]) = s + 2$ and let $S \subset N(x)$ with
    $ |S| = s + 2$ be any independent set, where $k \ge 4$ and $s \ge 0$ are integers.
     Let $M$ be a set of missing edges of $G[N(x) \setminus S]$.  Then there
     exists a collection $\{P_{uv}\mid uv\in M\} $ of paths in $G$ such that
     for each $uv\in M$, $P_{uv}$ has ends $u, v$ and all its internal vertices
     in $G \setminus N[x]$. Moreover,  if vertices $u,v,w,z$ with $uv,wz\in M$ are distinct, then
     the paths $P_{uv}$ and $P_{wz}$ are vertex-disjoint.
 \end{lem}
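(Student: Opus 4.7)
The plan is to argue by contradiction and use the $k$-contraction-criticality of $G$. Suppose no family $\{P_{uv} : uv \in M\}$ of pairwise vertex-disjoint paths with the stated properties exists.

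First I would reformulate the goal: a $u$--$v$ path with ends $u, v$ and interior in $V(G) \setminus N[x]$ exists if and only if some connected subgraph $T_{uv}$ of $G \setminus N[x]$ has at least one vertex in $N(u) \setminus N[x]$ and at least one vertex in $N(v) \setminus N[x]$; pairwise vertex-disjoint $T_{uv}$'s give pairwise vertex-disjoint $P_{uv}$'s, and conversely the interior of each $P_{uv}$ supplies such a $T_{uv}$. Hence it suffices to produce a pairwise vertex-disjoint family of connectors $\{T_{uv} : uv \in M\} \subseteq G \setminus N[x]$.

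Next I would take a maximum subcollection $M' \subseteq M$ for which pairwise vertex-disjoint connectors $\{T_{uv}\}_{uv \in M'}$ exist and assume, for contradiction, that $M' \ne M$. Pick $u_0 v_0 \in M \setminus M'$ and set $R := (G \setminus N[x]) \setminus \bigcup_{uv \in M'} V(T_{uv})$. Maximality of $M'$ forces that no connected component of $R$ is adjacent to both $u_0$ and $v_0$, else that component would serve as $T_{u_0 v_0}$. Consequently the components of $R$ partition into three classes: those adjacent to $u_0$, those adjacent to $v_0$, and those adjacent to neither.

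Then I would build a minor $\widetilde G$ of $G$ by contracting each connector $T_{uv}$ ($uv \in M'$) to a single vertex $t_{uv}$, contracting $u_0$ together with the union of all components of $R$ adjacent to $u_0$ into a super-vertex $\hat u_0$, and analogously producing $\hat v_0$. Each contracted set is connected, and (after handling the degenerate case when every contracted set is trivial, using the $7$-connectivity of $G$ from \cref{t:7conn} together with $|N[x]| = k+s+1$) $\widetilde G$ is a proper minor of $G$. By $k$-contraction-criticality $\widetilde G$ admits a $(k-1)$-coloring $c$. Pulling $c$ back to $G - x$ by distributing each color to the entire preimage of its supervertex yields a proper $(k-1)$-coloring of $G - x$ in which $c(u) = c(v)$ for every $uv \in M'$, and (after a Kempe-chain swap if necessary, since $\hat u_0$ and $\hat v_0$ are non-adjacent in $\widetilde G$) one may further arrange $c(u_0) = c(v_0)$.

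Finally, I would show this coloring extends to $G$ by assigning $x$ a color missing from $N(x)$. The key counting is that the enforced identifications contribute at least $|M'| + 1$ collisions among the $k+s$ colors on $N(x)$, so at most $k+s - |M'| - 1$ distinct colors appear there; combined with the independence bound $\alpha(G[N(x)]) = s + 2$ and the maximality $|S| = s+2$ from Dirac's \cref{l:alpha2}, a short argument shows that strictly fewer than $k-1$ colors in fact appear on $N(x)$, freeing a color for $x$. This produces a $(k-1)$-coloring of $G$, contradicting $\chi(G) = k$. The main obstacle is this final extension step: one must verify carefully that the independent set $S$ interacts correctly with the contracted regions so that the color-class count on $N(x)$ genuinely drops below $k-1$, likely requiring a case analysis on how $u_0, v_0$ lie relative to $S$ and on how $S$ meets the neighborhoods of the contracted connectors.
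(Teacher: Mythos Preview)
Your approach has a real gap that the closing caveat anticipates but does not resolve. First, the contraction you describe does not force $c(u)=c(v)$ for $uv\in M'$: the connectors $T_{uv}$ lie in $G\setminus N[x]$ and do not contain $u$ or $v$, so contracting $T_{uv}$ to a single vertex $t_{uv}$ leaves $u$ and $v$ as separate vertices of $\widetilde G$ with no color constraint between them. Even if you repair this by contracting $T_{uv}\cup\{u,v\}$ instead, the more serious problem remains: the independent set $S$ is never used in your minor. Nothing forces the $s+2$ vertices of $S$ to share a color, so in the pulled-back coloring $S$ may contribute up to $s+2$ distinct colors on $N(x)$, and no amount of collapsing within $N(x)\setminus S$ (which has only $k-2$ vertices) can then bring the total below $k-1$ unless you can guarantee roughly $|M'|\ge s$, which you cannot. (A smaller issue: invoking \cref{t:7conn} requires $k\ge 7$, whereas the lemma is stated for all $k\ge 4$.)

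The paper does not itself prove this lemma---it is quoted from Rolek and Song---but it notes that the proof uses Kempe chains, and that argument is direct rather than via a maximal subfamily. One contracts the connected set $\{x\}\cup S$ to a single vertex; by criticality the resulting proper minor has a $(k-1)$-coloring, which pulls back to a $(k-1)$-coloring of $G-x$ in which all of $S$ receives one color and the $k-2$ vertices of $N(x)\setminus S$ (each adjacent to the contracted vertex through $x$) receive the remaining $k-2$ colors bijectively. Since this coloring cannot extend to $x$, for every missing edge $uv$ of $G[N(x)\setminus S]$ the $(c(u),c(v))$-Kempe chain in $G-x$ through $u$ must also reach $v$; because $u$ and $v$ are the \emph{only} vertices of $N(x)$ carrying those two colors, the chain supplies a path $P_{uv}$ with all internal vertices in $G\setminus N[x]$. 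When $u,v,w,z$ are distinct the two relevant Kempe chains use four distinct colors and are automatically vertex-disjoint---no maximal-subfamily argument or case analysis on $S$ is needed.
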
 
   
 The proof of \cref{l:wonderful}  uses  Kempe chains.  Using a  result of Mader~\cite{7con} on rooted $K_4$ minors and the proof of \cref{l:wonderful}, the present authors~\cite{K84} proved a strengthened version of the remark  given in  \cite[Page 17]{RolekSong17a}. 
 
\begin{lem}[Lafferty and Song~\cite{K84}]\label{l:rootedK4} Let $G$ be any $k$-contraction-critical graph. Let $x\in V(G)$ be a vertex of
     degree $k + s$ with $\alpha(G[N(x)]) = s + 2$ and let $S \subset N(x)$ with
    $ |S| = s + 2$ be any independent set, where $k \ge 4$ and $s \ge 0$ are integers.
  If \[M=\{x_1y_1, x_1y_2, x_2y_1, x_2y_2, a_1b_{11}, \dots, a_1b_{1r_1},   \dots, a_mb_{m1}, \dots, a_mb_{mr_m}\}\] is a set of missing edges of $G[N(x)\less S]$, where  the vertices $x_1, x_2, y_1, y_2, a_1, \dots, a_m,
  b_{11}, \dots, b_{mr_m}\in N(x)\less S$ are all distinct, and  for all $1\le i  \le m$,  $a_ib_{i1}, \dots, a_ib_{ir_i}$ are $r_i$ missing edges  with $a_i$ as a common end, and $x_1x_2, y_1y_2\in E(G)$,   then  $G \se G[N[x]]+M$.    \end{lem}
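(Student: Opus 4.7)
The plan is to exhibit an explicit minor model of $G[N[x]] + M$ inside $G$: for each $v \in N[x]$ I will specify a connected branch set $B_v \ni v$, ensure the $B_v$ are pairwise disjoint, and verify that every edge of $G[N[x]] + M$ corresponds to an edge of $G$ between the appropriate branch sets. The edges already present in $G[N[x]]$ are automatic, so the real work is to realize the missing edges of $M$, which forces us to extend some of the branch sets using vertices of $G \setminus N[x]$.

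I would handle the two structurally different parts of $M$ separately. For the star edges $a_i b_{ij}$, \cref{l:wonderful} applied to an appropriate sub-collection of $M$ produces paths $P_{uv}$ in $G$ whose interiors lie in $G \setminus N[x]$ and which are vertex-disjoint whenever their four endpoints are distinct. The paths $P_{a_i b_{i1}}, \dots, P_{a_i b_{ir_i}}$ for a fixed $i$ share only the common endpoint $a_i$, so their union is a connected subgraph of $G$ containing $a_i$ and adjacent in $G$ to each of $b_{i1}, \dots, b_{ir_i}$; I absorb this union into $B_{a_i}$ while keeping $B_{b_{ij}} = \{b_{ij}\}$. Paths coming from different indices $i$ are vertex-disjoint by \cref{l:wonderful}, so the star branch sets are mutually disjoint.

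The new content is the $K_{2,2}$ block $\{x_1y_1, x_1y_2, x_2y_1, x_2y_2\}$. Since these four missing edges share endpoints, \cref{l:wonderful} alone cannot separate them into disjoint paths; together with the existing edges $x_1x_2$ and $y_1y_2$, what is required is a $K_4$ minor rooted at $(x_1, x_2, y_1, y_2)$ whose two non-root branch sets lie in $G \setminus N[x]$. Concretely, it suffices to find vertex-disjoint connected subgraphs $H_1, H_2 \subseteq G \setminus N[x]$ with $H_i$ adjacent in $G$ to $x_i$ and to both of $y_1, y_2$; then taking $B_{x_i} = \{x_i\} \cup V(H_i)$ and $B_{y_j} = \{y_j\}$ simultaneously realizes all four missing edges of the block. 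I will produce $H_1, H_2$ by invoking Mader's theorem on rooted $K_4$ minors, whose hypotheses I expect to verify by reading the relevant local connectivity off the Kempe-chain analysis that underlies the proof of \cref{l:wonderful}.

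The main obstacle will be disjointness: the rooted $K_4$ structure on $\{x_1, x_2, y_1, y_2\}$ and the star branch sets for the $a_i$'s must coexist disjointly inside $G \setminus N[x]$. I plan to resolve this by running the Kempe-chain / color-swapping argument of \cref{l:wonderful} once for the entire enhanced list of required connections, effectively treating the $K_{2,2}$ block as a built-in requirement of that proof and applying Mader's rooted $K_4$ minor theorem inside a single Kempe component that is large enough to accommodate it. In this way the disjointness emerges from a single contradiction-seeking coloring step rather than from a two-stage merge of independently produced path systems.
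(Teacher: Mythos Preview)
The paper does not actually supply a proof of this lemma; it is quoted from \cite{K84}, and the only information given here is the one-line description preceding the statement: the result is obtained ``using a result of Mader~\cite{7con} on rooted $K_4$ minors and the proof of \cref{l:wonderful}.'' Your proposal matches that description precisely: you handle the star blocks $a_ib_{i1},\dots,a_ib_{ir_i}$ via the Kempe-chain paths of \cref{l:wonderful}, you handle the $K_{2,2}$ block $\{x_1y_1,x_1y_2,x_2y_1,x_2y_2\}$ via Mader's rooted $K_4$ minor theorem, and you secure global disjointness by embedding both steps inside a single run of the Kempe-chain argument so that the rooted $K_4$ is found inside one bichromatic component while the star paths live in the others. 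That is the intended architecture, and nothing in your outline is off-track.

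One point worth sharpening when you write the details: your formulation asks for $H_1,H_2\subseteq G\setminus N[x]$ with each $H_i$ adjacent to $x_i$ and to both $y_1,y_2$, i.e.\ you insist the branch sets of $y_1,y_2$ stay singletons. Mader's theorem only promises a rooted $K_4$ minor with $x_1,x_2,y_1,y_2$ in distinct branch sets, and a priori the $y_j$-branch sets could also grow. This is harmless for the conclusion $G\succcurlyeq G[N[x]]+M$ provided the growth stays inside $G\setminus N[x]$ (so that no other $B_v$ is disturbed), which is exactly what applying Mader inside the appropriate Kempe subgraph guarantees; just make sure your write-up allows all four branch sets to expand rather than only $B_{x_1},B_{x_2}$.
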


\begin{rem}\label{r:rootedK4}  As observed in   \cite{K84},   \cref{l:rootedK4}  can be   applied      when  \[M=\{x_1y_1, x_1y_2, x_2y_1, x_2y_2, a_1b_{11}, \dots, a_1b_{1r_1},  \dots, a_mb_{m1}, \dots, a_mb_{mr_m}\}\] is a subset of  edges and missing edges of  $G[N(x)\less S]$, where    $x_1, x_2, y_1, y_2, a_1, \dots, a_m,
  b_{11}, \dots, b_{mr_m}\in N(x)\less S$ are all distinct, and $x_1x_2, y_1y_2\in E(G)$.  Under those circumstances, it suffices to apply \cref{l:rootedK4} to $M^*$, where $M^*= \{e\in M\mid e \text{ is a missing edge of } G[N(x)\less S])\}$. It is straightforward to see that  $G\se G[N[x]]+M$.  
  \end{rem}
   
\begin{figure}[htb]
\centering
\includegraphics[scale=0.5]{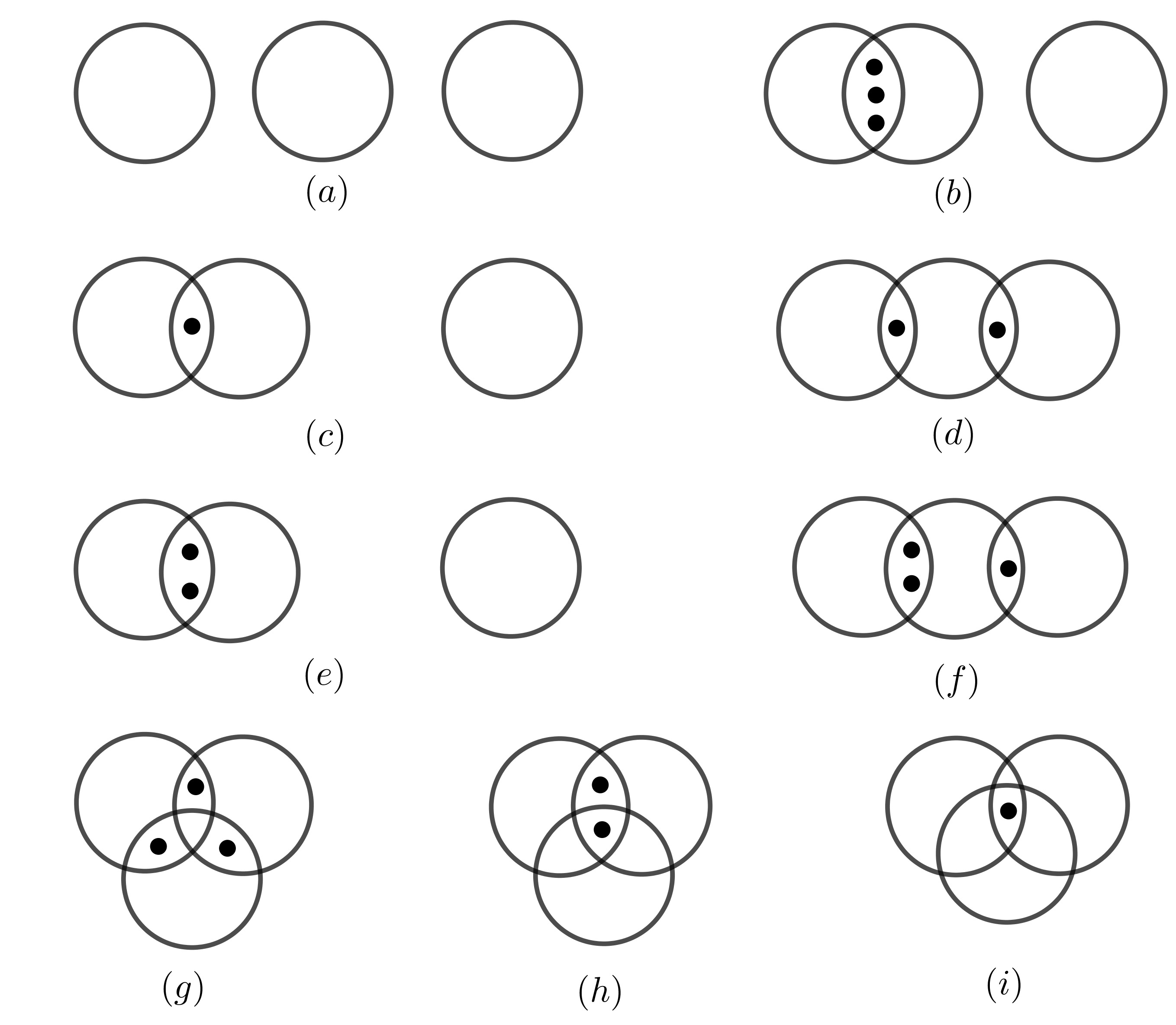}
\caption{The nine possibilities  for three $5$-cliques  in Theorem~\ref{t:goodpaths}.}
\label{fig:threek5s}
\end{figure}

Finally we need a tool to find a desired $\kns$ minor through three different $6$-cliques in   $7$-connected graphs. This method was first introduced  by Robertson, Seymour and Thomas~\cite{RST} to prove Hadwiger's Conjecture for $t=6$: they found a desired $K_6$ minor via three different $4$-cliques in $6$-connected non-apex graphs. The method  was later extended by Kawarabayashi and Toft~\cite{KT05} to find a desired $K_7$ minor via three different $5$-cliques in $7$-connected  graphs. 
It is worth noting that \cref{t:goodpaths} corresponds to \cite[Lemma 5]{KT05}, where the existence of such seven  ``good paths" follows from the proof  of   \cite[Lemma 5]{KT05}.

\begin{thm}[Kawarabayashi and Toft~\cite{KT05}]\label{t:goodpaths}
Let $G$ be a $7$-connected graph such that $|G| \geq 19$.
Let $L_1, L_2$, and $L_3$ be three different $5$-cliques of   $G$  such that  $|L_1\cup L_2\cup L_3|\ge 12$, that is, they fit into one of the nine configurations depicted in Figure~\ref{fig:threek5s}.
Then $G$ has seven pairwise vertex-disjoint ``good paths", where  a ``good path"  is  an $(L_i, L_j)$-path in $G$ with  $i \neq j$.
\end{thm}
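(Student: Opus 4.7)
The plan is to proceed by contradiction: assume that $G$ has at most six pairwise vertex-disjoint good paths. Set $L := L_1 \cup L_2 \cup L_3$. Observe first that each vertex in a pairwise overlap $L_i \cap L_j$ (with $i \neq j$) is itself a trivial good path of length zero, and so contributes automatically to the desired family. Across the nine configurations the number $f$ of such \emph{free} good paths is determined by $|L|$ and satisfies $0 \le f \le 3$; the task reduces to producing $7 - f$ additional pairwise vertex-disjoint, nontrivial good paths, each entirely disjoint from the overlap vertices.

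I would formulate this residual task as a multi-terminal linkage problem in the graph $G^\ast$ obtained from $G$ by deleting the overlap vertices, with terminal sets $L_i^\ast := L_i \setminus \bigcup_{j \neq i} L_j$; a good path in $G^\ast$ is one whose two endpoints lie in distinct $L_i^\ast$. By Mader's theorem on $\mathcal{S}$-paths (or equivalently a direct min-max argument built on iteratively augmenting the current linkage), either one obtains the required $7 - f$ paths and the proof concludes, or there exist a separator $X \subseteq V(G^\ast)$ with $|X| \le 6 - f$ together with a partition of the relevant components of $G^\ast \setminus X$ certifying the obstruction.

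The contradiction is then reached by invoking the $7$-connectivity of $G$ together with $|G| \ge 19$. Since $|X| \le 6$, the set $X$ cannot separate any two vertices in $G$, so $X$ must lie essentially inside $L$ and act only by saturating the clique remnants $L_i^\ast$ themselves. But $|V(G) \setminus L| \ge 4$ then provides enough vertices outside $L$ to reroute at least one additional good path entirely through $G \setminus L$, violating the assumed upper bound. A matching distribution of endpoints along each $L_i$ is finally used to ensure that the seven paths produced can be taken to be pairwise vertex-disjoint.

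The main obstacle will be the tightest case $|L| = 15$ of three pairwise disjoint $5$-cliques: here the seven required paths have to be distributed across the three clique-pairs as $(3,2,2)$ or $(3,3,1)$ (up to permutation), consuming essentially every clique vertex as a path-endpoint, so the augmenting-path/separator analysis must be performed with care to prevent ``cross-blocking'' of one clique-pair linkage by another. The remaining cases $|L| \in \{12, 13, 14\}$ fall into the same framework but with $f \ge 1$ trivial paths already in hand, so fewer paths need to be routed nontrivially and the counting is looser; a brief case analysis over the nine configurations depicted in Figure~\ref{fig:threek5s} finishes these.
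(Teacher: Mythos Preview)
The paper does not prove this theorem at all: it is quoted from Kawarabayashi and Toft~\cite{KT05}, with the remark that the existence of the seven good paths ``follows from the proof of \cite[Lemma 5]{KT05}.'' So there is no in-paper proof to compare your attempt against, and the expected response here is simply to cite the external result.

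That said, your sketch has a genuine gap in the step where you invoke Mader's $\mathcal{S}$-paths theorem. Mader's min--max does \emph{not} say that the obstruction to $k$ pairwise vertex-disjoint $\mathcal{S}$-paths is a separator $X$ of size at most $k-1$; the dual object is a set $U$ together with a component structure in $G-U$, and the bound involves $|U|$ plus a sum of $\lfloor \cdot/2\rfloor$ terms over components. Your assertion that one obtains $X$ with $|X|\le 6-f$ and that $7$-connectivity then forces $X\subseteq L$ is therefore not justified: the Mader obstruction can have $|U|$ much smaller than $6-f$ with the slack absorbed by many components each contributing a floor term, and nothing prevents $U$ from lying outside $L$. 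The subsequent rerouting argument (``$|V(G)\setminus L|\ge 4$ provides enough vertices outside $L$ to reroute one more good path'') is likewise not an argument but a hope; you have not shown that such a reroute is disjoint from the six paths already chosen. The actual proof in \cite{KT05} proceeds by a substantial case analysis over the nine configurations, using $7$-connectivity via repeated applications of Menger's theorem between carefully chosen pairs of sets, rather than by a single global appeal to Mader.
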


\section{Coloring graphs with no $\kns$ minor}\label{s:coloring}

 We first use \cref{t:goodpaths} to prove a lemma that finds a $\kns$ minor via three different $6$-cliques in $7$-connected graphs with at least $19$ vertices. 

\begin{lem}
\label{l:threek6s}
Let $G$ be a $7$-connected graph such that  $|G|\ge19$. If  $L_1$, $L_2$, and $L_3$ are  three  $6$-cliques of $G$ satisfying   \[\min\{|L_1 \setminus (L_2 \cup L_3)|, |L_2 \setminus (L_1 \cup L_3)|, |L_3 \setminus (L_1 \cup L_2)| \} \geq 1,\tag{$*$}\] 
then $G$ has a $\kns$ minor.
\end{lem}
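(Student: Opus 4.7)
The plan is to follow Kawarabayashi and Toft's approach in \cref{t:goodpaths} for finding $K_7$ minors via three $5$-cliques (which was adapted for $\mathcal{K}_8^{-4}$ minors in~\cite{K84}). The strategy is to pass from the given $6$-cliques $L_1, L_2, L_3$ to three $5$-cliques $L_i' \subseteq L_i$ satisfying the hypothesis of \cref{t:goodpaths}, apply that theorem to obtain seven pairwise vertex-disjoint good paths, and then build a $\kns$ minor on nine branch sets: the seven paths plus two singleton branch sets drawn from the $6$-cliques.

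I would first split by $n := |L_1 \cup L_2 \cup L_3|$, which satisfies $n \geq 8$ by $(*)$. When $n \geq 13$, dropping a single vertex from each $L_i$ (preferably a vertex of $L_i \cap L_j$ for some $j$, and never a private vertex from $(*)$) produces $5$-cliques $L_i'$ with $|L_1' \cup L_2' \cup L_3'| \geq 12$ fitting one of the nine configurations in \cref{fig:threek5s}. When $n \leq 12$, the three cliques overlap so heavily that \cref{t:goodpaths} cannot be applied directly, and I would argue separately (see the obstacle paragraph).

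Assuming \cref{t:goodpaths} applies, let $P_1, \ldots, P_7$ be the seven pairwise vertex-disjoint good paths. Write $x_{ij}$ for the number of paths between $L_i'$ and $L_j'$, and $e_i := x_{ij} + x_{ik}$ for the number of endpoints in $L_i'$; since $|L_i'| = 5$ we have $e_i \leq 5$, and $e_1 + e_2 + e_3 = 14$ forces $(e_1, e_2, e_3)$ to be a permutation of $(4, 5, 5)$. Assume $e_1 = 4$ without loss of generality. Then take the nine branch sets to be $V(P_1), \ldots, V(P_7)$ together with two singletons $\{s\}, \{s'\} \subseteq L_1$ chosen from the $|L_1| - e_1 = 2$ non-endpoint vertices of $L_1$ (with a small extra argument to verify they can be taken disjoint from the path internals). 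Any two paths share one of the $L_i'$'s and hence have two distinct endpoints in a common $L_i'$, giving an edge between them via the clique $L_i'$; so the seven path branch sets contribute $\binom{7}{2} = 21$ edges. Each singleton is adjacent to every path with an $L_1$-endpoint (exactly $e_1 = 4$ of them), and $\{s\}$ is adjacent to $\{s'\}$ via $L_1$. Therefore the number of missing adjacencies is at most $(7 - 4) + (7 - 4) = 6$, yielding a $K_9^{-6}$ minor and hence $G \succcurlyeq \kns$.

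The main obstacle will be the small-union case $n \leq 12$, where \cref{t:goodpaths} cannot be invoked. In that regime the three $6$-cliques share many vertices, so $G[L_1 \cup L_2 \cup L_3]$ is near-complete on at most $12$ vertices; I would construct a $\kns$ minor directly from $L_1 \cup L_2 \cup L_3$ together with one or two additional branch sets drawn from $V(G) \setminus (L_1 \cup L_2 \cup L_3)$, which is nonempty by $|V(G)| \geq 19$ and from which $7$-connectedness of $G$ supplies the needed connections. A careful case analysis by the intersection pattern $(|L_i \cap L_j|, |L_1 \cap L_2 \cap L_3|)$, verifying at most $6$ missing adjacencies in each subcase, is expected to be the most tedious part of the proof.
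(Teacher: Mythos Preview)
Your plan has the right shape, but it diverges from the paper's decomposition and contains a genuine gap in the singleton step.

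The paper does not split on $n = |L_1 \cup L_2 \cup L_3|$. It first observes that a $7$-connected $G$ with no $\kns$ minor must be $\km{8}{5}$-free (contract a component of $G \setminus V(H)$ for any $H \in \km{8}{5}$), which immediately forbids $|L_i \cap L_j| = 4$ and $|L_1 \cap L_2 \cap L_3| = 5$, and then cases on $p := \max_{i<j}|L_i \cap L_j|$. Only for $p \le 1$ does it invoke \cref{t:goodpaths}; for $p \in \{2,3\}$ a direct Menger argument between $L_1 \setminus L_2$ and $L_2 \setminus L_1$ in $G \setminus (L_1 \cap L_2)$ already produces the minor from $G[L_1 \cup L_2]$ alone, and $p = 5$ gets a separate Menger argument bringing in $L_3$. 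The $\km{8}{5}$-freeness observation is the missing idea that dispatches most of what you defer to your ``most tedious'' $n \le 12$ case analysis.

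The gap is that your two ``non-endpoint vertices of $L_1$'' need not be disjoint from the seven paths. With $e_1 = 4$ you know four vertices of $L_1'$ serve as $L_1'$-endpoints, but the remaining vertex of $L_1'$ may lie in $L_2' \cup L_3'$ and serve as an endpoint there, and the vertex in $L_1 \setminus L_1'$, if it lies outside $L_1' \cup L_2' \cup L_3'$, may be an \emph{internal} vertex of an $(L_2',L_3')$-path: minimizing total path length only forces internal vertices out of $\bigcup_i L_i'$, not out of $\bigcup_i L_i$. The paper avoids this (in its $p \le 1$ case) by choosing $L_i'$ with $L_i' \cap L_j' = L_i \cap L_j$ and then re-minimizing over seven vertex-disjoint $(L_i,L_j)$-paths for the full $6$-cliques, so that internal vertices avoid all of $L_1 \cup L_2 \cup L_3$; its endgame is then a short case split on $t_{1,2} \in \{3,4,5\}$ in which, depending on the case, one or two good paths are contracted to an edge rather than a vertex to supply the extra branch sets. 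Your ``small extra argument'' would have to reproduce something of this kind, and it is not small.
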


\begin{proof} Suppose $G$ has no  $\kns$ minor. By the assumption ($*$),  we see that $|L_1 \cap L_2\cap L_3| \le 5$, and $|L_i \cap L_j| \le 5$ for $1\le i<j\le 3$. 
We first observe that  $G$ is $\km{8}{5}$-free:   suppose $G$ has an $H$ subgraph for some $H\in \km{8}{5}$. Since $G$ is $7$-connected, we see that there are at least seven pairwise disjoint $(V(H), V(C))$-paths in $G$ for every component $C$ of  $G \setminus  V(H)$. Thus we obtain a $\km{9}{6}$ minor by  contracting a  component of $G \setminus  V(H)$ to a single vertex, a contradiction.  It follows   that $|L_1 \cap L_2\cap L_3| \ne 5$, and $|L_i \cap L_j| \ne  4$ for $1\le i<j\le 3$, else neither $G[L_1 \cup L_2\cup L_3]$ nor  $G[L_i \cup L_j]$ is $\km{8}{5}$-free. We may assume that $|L_1 \cap L_2| \ge |L_1 \cap L_3|  \ge |L_2 \cap L_3|$. \medskip

Suppose first $|L_1 \cap L_2| \le 1$. Let $L_i'$ be a $5$-clique of $L_i$ for each $i\in[3]$ such that $L_i\cap L_j=L_i'\cap L_j'$ for $1\le i<j\le 3$. Then  $L_1', L_2'$ and $ L_3'$   fit into one of the five configurations  in Figure~\ref{fig:threek5s}(a,c,d,g,i). By Theorem~\ref{t:goodpaths} applied to $L_1', L_2'$ and $ L_3'$,  there exist seven pairwise  vertex-disjoint ``good paths", say $Q_1, \ldots, Q_7$,  between   $L_1, L_2$, and $L_3$;    we choose $Q_1, \ldots, Q_7$ so that   $|V(Q_1)|+ \cdots+|V(Q_7)|$ is as small as possible. It follows that  no internal vertex of each $Q_i$ belongs to   $L_1 \cup L_2 \cup L_3$,  and no   vertex of $L_i\cap L_j$ belongs to a ``good path''  of length at least one. Let $t_{i,j}$ denote the number of ``good paths" between  $L_i$ and $L_j$ for $1\le i<j\le 3$. We may assume that $t_{1,2}\ge t_{1,3}\ge t_{2,3}$. Then $3\le t_{1,2}\le 5$. We may further  assume that $Q_6$ and $Q_7$ are $(L_1, L_2)$-paths  of length at least one.  Suppose $t_{1,2}= 5$.   By    contracting each of $Q_1, \ldots, Q_5$ to a single vertex, all the edges,  but one,  of $Q_6$ (that is, contracting $Q_6$ to a $K_2$), and all the edges,  but one, of $Q_7$ (that is, contracting $Q_7$ to a $K_2$),  we see that $G\se \kns$, a contradiction. Thus  $3\le t_{1,2}\le 4$.  Recall that $ t_{1,3}\ge  t_{2,3}$.   
 Let  $x\in   L_2$; in addition, let    $y\in     L_1\cup L_2$ with $y\ne x$ when  $t_{1,2}=3$,   such that neither $x$ nor $y$ is an end of any ``good path".  But now  contracting each of $Q_1, \ldots, Q_7$ to a single vertex, together with $x$ and $y$,  yields a   $ \kns$ minor in $G$ when $t_{1,2}=3$, and contracting each of $Q_1, \ldots, Q_6$ to a single vertex and $Q_7$ to a $K_2$, together with $x$,  yields a   $ \kns$ minor in $G$ when $t_{1,2}=4$,   a contradiction.
 This proves that $|L_1 \cap L_2| \ge 2$. Let $a_1, \ldots, a_p\in L_1\cap L_2$, where $p:=|L_1\cap L_2|$. Then $p=5$ or $2\le p\le 3$. \medskip

Suppose next $2\le p\le  3$.
 By Menger's Theorem, there exist  $6-p\ge3$  pairwise  vertex-disjoint $(L_1 \setminus L_2, L_2 \setminus L_1)$-paths, say $Q_1, \ldots, Q_{6-p}$,  in $G \setminus \{ a_1, \ldots, a_p\}$.  But then    we obtain a $\kns$ minor in $G$ from $G[L_1\cup L_2]$ by contracting each of $Q_1, Q_2, Q_3$ to   a $K_2$; in addition,   contracting  $Q_4$ to a single vertex  when $p=2$, a contraction.  \medskip

It remains to consider the case $p=5$.  Let $x \in L_1 \setminus L_2 $ and   $ y \in L_2 \setminus L_1 $. By the assumption ($*$), $x, y\notin L_3$.  Let  $z\in L_3\less (L_1\cup L_2)$.  Since $G$ is $\km{8}{5}$-free, we see that  $|L_1 \cap L_2\cap L_3|\le 2$, else $G[L_1\cup L_2\cup\{z\}]$ is not $\km{8}{5}$-free.     Suppose $  |L_1 \cap L_2\cap L_3|= 2$. We may assume $a_4, a_5\in  L_1 \cap L_2\cap L_3 $. Let $z'\in L_3\less(L_1\cup L_2)$ such that $z'\ne z$.  Then $G\less \{a_4, a_5\}$ has five pairwise internally vertex-disjoint $(z, \{x, a_1, a_2, a_3,  y\})$-paths, say $Q_1, \ldots, Q_5$. We may assume that $z'$ does not belong to  $Q_1, \ldots, Q_4$. Let $Q_5^*$ be the $(z', w)$-subpath of $Q_5$ when $z'$ lies on $Q_5$, where $w$ is the other end of $Q_5$. Then $G\se \kns$ from $G[L_1\cup L_2\cup\{z,z'\}]$ by contracting each  of 
$Q_1\less z, \ldots,  Q_5\less z$ to a single vertex  when $z'\notin V(Q_5)$; and  each of $Q_1\less z, \ldots,  Q_4\less z $, and  $Q_5^* \less z'$  to a single vertex when $z'\in V(Q_5)$, a contradiction. 
This proves that  $  |L_1 \cap L_2\cap L_3|\le 1$. By Menger's Theorem, $G\less y$ has six pairwise vertex-disjoint   $(L_3, L_1)$-paths, say  $Q_1, \dotsc, Q_6$. We may assume that    $a_i$ is an end of $Q_i$   for each $i \in [5]$.  Then $x$ is an end of $Q_6$. We may assume further assume that $a_5\notin L_3$. But then we obtain a $\km{9}{5}$ minor in $G$ from $G[L_1\cup L_2\cup L_3]$ by contracting each   of $Q_1, \ldots, Q_4, Q_5\less a_5, Q_6\less x$ to a  single vertex, a contradiction.\medskip

This completes the proof of \cref{l:threek6s}.
\end{proof}

\begin{figure}[htb]
\centering
\includegraphics[scale=0.16]{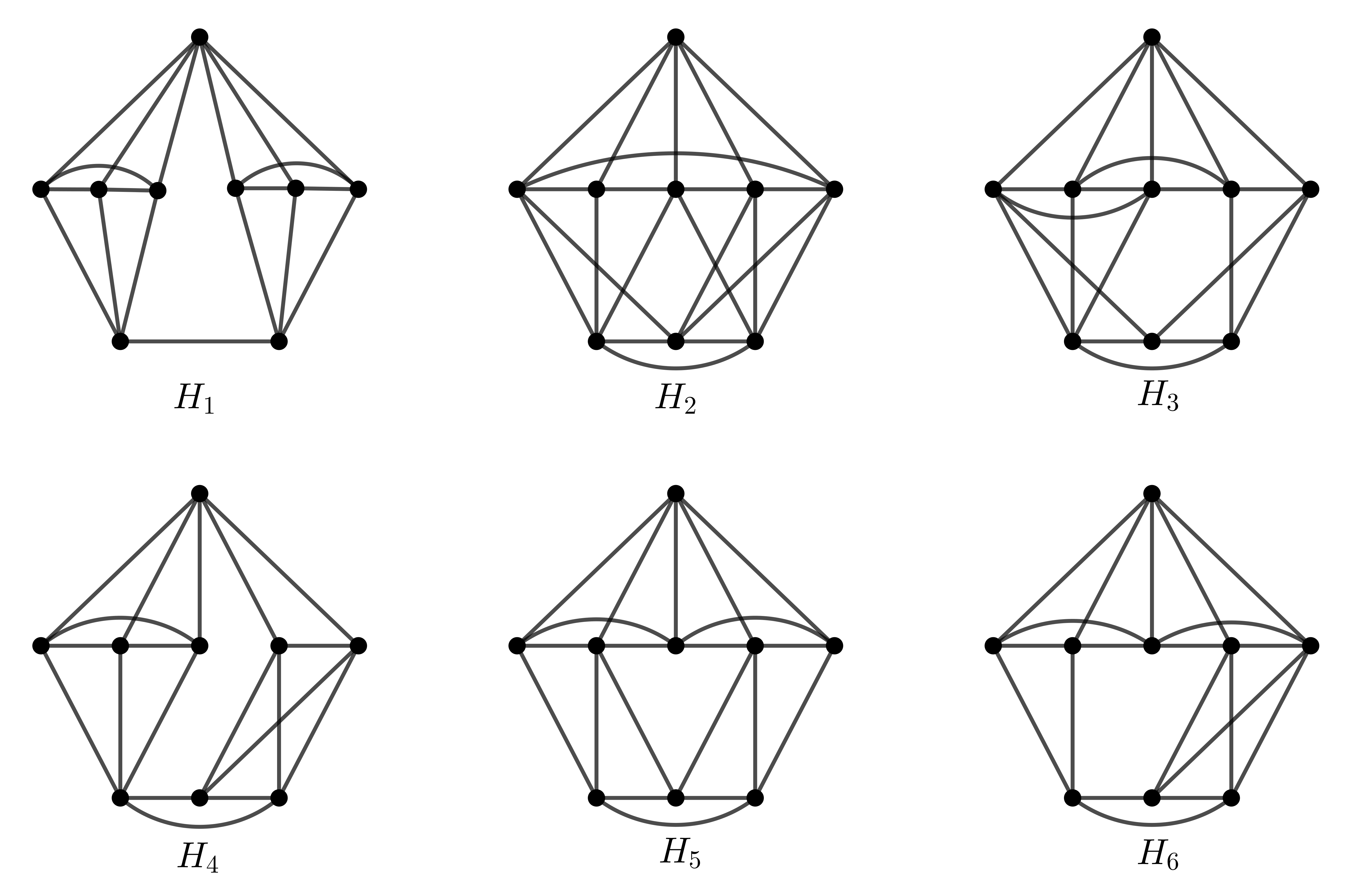}
\caption{Six $K_5$-free  graphs $H$ with  $|H|=9$ and $\alpha(H)=2$.}
\label{fig:counter}
\end{figure}

\begin{lem}
\label{l:H9}
Let $H$ be a graph such that $|H| = 9$ and $\alpha(H) = 2$.
Then  $H$  contains   $K_5$  or  one of the graphs in Figure~\ref{fig:counter} as a spanning subgraph. 
\end{lem}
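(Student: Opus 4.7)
The plan is to pass to the complement: set $G := \overline{H}$. Then $\alpha(H) = 2$ is equivalent to $G$ being triangle-free on $9$ vertices, $H$ containing $K_5$ is equivalent to $\alpha(G) \ge 5$, and $H$ containing one of the six graphs $G_i$ in Figure~\ref{fig:counter} as a spanning subgraph is equivalent to $G$ being a spanning subgraph of $\overline{G_i}$. So it suffices to prove that every triangle-free graph $G$ on $9$ vertices with $\alpha(G) \le 4$ is a subgraph of $\overline{G_i}$ for some $i \in [6]$.

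An immediate structural remark is that, since $G$ is triangle-free, every neighborhood $N_G(v)$ is independent, which forces $\Delta(G) \le \alpha(G) \le 4$; in particular $e(G) \le 18$, equivalently $\delta(H) \ge 4$. Further, since $R(3,4) = 9$, every triangle-free graph on $9$ vertices has an independent set of size $4$, and under $\alpha(G) \le 4$ such a set $I$ is a maximum independent set of $G$. Writing $U := V(G) \setminus I$, so $|U|=5$, the maximality of $I$ yields $N_G(u) \cap I \neq \es$ for every $u \in U$.

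I would then reduce to the edge-maximal case: since adding an edge to a triangle-free graph with $\alpha \le 4$ can only decrease $\alpha$, we may greedily add any non-edge whose addition does not create a triangle, obtaining a supergraph $G^* \supseteq G$ that is triangle-free, satisfies $\alpha(G^*) \le 4$, and has the property that every non-edge of $G^*$ has a common neighbor in $G^*$. It thus suffices to verify that, up to isomorphism, the only edge-maximal triangle-free graphs on $9$ vertices with $\alpha \le 4$ are $\overline{G_1}, \ldots, \overline{G_6}$. This verification is carried out by a case analysis driven by (i) the bipartite edges between $U$ and $I$, where triangle-freeness forces any two adjacent vertices in $U$ to have disjoint neighborhoods in $I$, and (ii) the induced triangle-free graph $G^*[U]$ on $5$ vertices, which together with any $v \in I$ cannot form an independent set of size $5$ and hence must contain an edge.

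The main obstacle is managing this case analysis cleanly: the bipartite degree sequences $(d_I(u))_{u \in U}$ together with the possible isomorphism types of $G^*[U]$ and the maximality requirement branch into many subcases. In each subcase, one must either extend further (contradicting edge-maximality) or explicitly identify the resulting graph with one of the six targets $\overline{G_i}$. The key constraints to exploit are $\Delta(G^*) \le 4$ (limiting how many edges can land in the $U$-to-$I$ bipartite graph and within $G^*[U]$) together with the requirement that every non-edge be realized as the endpoints of a path of length $2$ in $G^*$, which will eliminate the configurations that do not match one of the six figures.
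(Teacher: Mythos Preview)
Your complementation strategy is exactly dual to the paper's direct argument: where you pass to $G=\overline{H}$ and reduce to an edge-maximal triangle-free graph, the paper stays in $H$ and reduces to an edge-minimal graph subject to $\alpha(H)=2$ and $K_5$-freeness; these are the same reduction viewed through the complement. The organizing device for the case analysis differs. You fix a maximum independent $4$-set $I$ in $G$ (via $R(3,4)=9$) and study the $I$--$U$ bipartite structure together with $G[U]$. The paper instead fixes a maximum-degree vertex $x$ in $H$, pins down $\Delta(H)=5$ by successively eliminating $\Delta\in\{4,6,7\}$, and then analyzes $H[N(x)]$ (a $5$-vertex graph with independence number at most $2$) against the triangle $V(H)\setminus N[x]$. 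Both decompositions are viable; the paper's $1+5+3$ split lets it exploit the short list of $5$-vertex graphs with $\alpha\le 2$, whereas your $4+5$ split retains the symmetry of $I$ but pushes more of the branching into the triangle-free $G[U]$ and the bipartite degree sequence.

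One minor imprecision: the sentence ``the only edge-maximal triangle-free graphs on $9$ vertices with $\alpha\le 4$ are $\overline{G_1},\ldots,\overline{G_6}$'' asserts more than you need or have checked (it presumes each $\overline{G_i}$ is itself edge-maximal). The correct target, and what the paper proves in complement form, is that every such $G^*$ is a spanning subgraph of some $\overline{G_i}$; since each $\overline{G_i}$ is triangle-free, maximality then forces equality. More substantively, you have outlined but not executed the case analysis, and that is where essentially all of the work lies. In the paper it runs roughly two pages and is driven by a chain of structural facts about the edge-minimal $H$ (no dominating edge; $H[N(v)]$ contains a triangle for every $5$-vertex $v$; $\delta(H[N(v)])\ge 2$; $H[N(v)]$ is $(K_3\cup\overline{K_2})$-free), with the six targets $H_1,\ldots,H_6$ arising one at a time as obstructions. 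You should expect comparable length from your decomposition, and you will need an explicit mechanism for recognizing each $\overline{G_i}$ when it appears.
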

\begin{proof}
 Suppose $H$ is $K_5$-free, and  $H_i$-free for each   $H_i$ given  in Figure~\ref{fig:counter}.   We may assume that $H$ is edge-minimal subject to being $K_5$-free and $\alpha(H)=2$.   Then $H$ has no dominating edge, where an edge $xy\in E(H)$ is dominating if every vertex in $V(H)\less\{x, y\}$ is adjacent to $x$ or $y$.   This implies that $\Delta(H)\le7$ and  \medskip
 
 \noindent (a) no vertex in $N(v)$ is complete to $V(H)\less N[v]$ for each $v\in V(H)$. \medskip
 
  Since $\alpha(H)=2$, we see that,    for each $v\in V(H)$, $V(H)\less N[v]$ is a clique, and so  $|H\backslash N[v]|\le4$ because $H$ is $K_5$-free.   Then   $\delta(H) \ge 4$ and  $H[N(v)]$ is $K_4$-free for each $v\in V(H)$.   By (a),   $\Delta(H)\le 6$. 
 Let $x\in V(H)$ be a vertex of   degree   $\Delta(H)$.  Let  $N(x):=\{x_1, \ldots, x_{d(x)}\}$ and $V(H)\less N[x]:=\{y_1, \ldots, y_{8-d(x)}\}$.     Suppose $d(x)=4$. Then  $V(H)\less N[x]$ is a $4$-clique,  and each $y_i$ is adjacent to exactly one  vertex  in $N(x)$.  We may assume that $x_1y_1\in E(H)$. We may further assume that $x_1y_4\notin E(H)$ and $x_4y_4\in E(H)$. Then $\{x_2, x_3, x_4\}$ is a $3$-clique, and $x_1$ is complete to $\{x_2, x_3\}$ because $y_4$ is anticomplete to $\{x_2, x_3\}$. But then $x_4$ is complete to $\{y_2, y_3\}$ because $x_1$ is anticomplete to $\{x_4, y_2,y_3\}$, contrary to the fact that $\Delta(H)=4$.     Suppose next $d(x)=6$.   Then  $V(H)\less N[x]=\{y_1, y_2\}$, and by (a), both $y_1$ and $y_2$ are $4$-vertices such that $y_1$ and $y_2$ have no common neighbor in $N(x)$. Thus $N(y_1)\cap N(x)$ and $N(y_2)\cap N(x)$ are disjoint $3$-cliques in $H$.   But then $H$ contains $H_1$ as a subgraph, a contradiction.  This proves that   $d(x)=5$, and so $N(x)=\{x_1, \ldots, x_5\}$ and $V(H)\less N[x]=\{y_1, y_2, y_3\}$.   \medskip

  Suppose $H[N(x)]$ is $K_3$-free. Note that   $\alpha(H[N(x)])=2$.  Thus $H[N(x)]=C_5$, say with vertices $x_1, \ldots, x_5$ in order.   Since $d(y_1)\le 5$, we may assume that $y_1$ is anticomplete to $\{x_4, x_5\}$. Then $y_1$ is complete to $\{x_1, x_2, x_3\}$.  By (a) applied to $\{x_1, x_2, x_3\}$, we may further assume that $y_3$ is anticomplete to $\{x_1, x_2\}$. Then $y_3$ is complete to $\{x_3, x_4, x_5\}$ and so $y_2x_3\notin E(H)$. Then $y_2$ is complete to $\{x_1, x_5\}$; in addition, $y_2$ is adjacent to exactly one of $x_2$ and  $x_4$ because $\alpha(H)=2$ and $\Delta(H)=5$.  It follows that  $H$ contains $H_2$ as a subgraph, a contradiction. This proves that    \medskip

\noindent (b)   $H[N(v)]$  contains  $K_3$ as a  subgraph  for every  $5$-vertex $v$.  \medskip

 Note that $\delta(H[N(x)])\ge 1$ because $H$ is $K_5$-free. Suppose $\delta(H[N(x)])= 1$. We may assume that $x_5x_4\in E(H)$ and $x_5$ is anticomplete to $\{x_1, x_2, x_3\}$.  By (a), we may assume that  $x_5y_1\notin E(H)$. Then $x_5$ is a $4$-vertex, $\{y_1, x_1, x_2, x_3\}$ is a $4$-clique, and $x_5$ is complete to $\{y_2, y_3\}$.  By (a) applied to $\{x_1, x_2, x_3\}$, we may assume that  $y_2$ is anticomplete to $\{x_2, x_3\}$. Suppose $x_4y_2\notin E(H)$. Then $x_4$ is complete to $\{x_2, x_3\}$ and $y_2x_1, x_4y_3\in E(H)$. Thus $H$ contains $H_3$ as a subgraph, a contradiction.  It follows that $x_4y_2\in E(H)$. Then $x_4y_3\notin E(H)$, else $H$ contains $H_4$ as a subgraph. Note that each of $x_1, x_2, x_3$ is adjacent to exactly one of $x_4$ and $y_3$; and either $e_H(\{x_1, x_2, x_3\}, x_4)=2$ and $e_H(\{x_1, x_2, x_3\}, y_3)=1$, or $e_H(\{x_1, x_2, x_3\}, x_4)=1$ and $e_H(\{x_1, x_2, x_3\}, y_3)=2$. In the former case,  we may assume that $x_1y_3, x_2x_4, x_3x_4\in E(H)$; thus $H$ contains $H_3$ as a subgraph, a contradiction.  In the latter case, we may assume that $x_1y_3, x_2y_3, x_3x_4\in E(H)$; again  $H$ contains $H_3$ as a subgraph by drawing the graph $H$ according to the $5$-vertex  $y_1$, a contradiction. This proves that \medskip
 
  \noindent (c)\, $\delta(H[N(v)])\ge 2$ for every $5$-vertex $v$. \medskip
 
 By (b), we may assume that $\{x_1, x_2, x_3\}$ is a clique.  Suppose $x_4x_5\notin E(H)$. Note that neither $x_4$ nor $x_5$ is complete to $\{x_1, x_2, x_3\}$, and no vertex in $\{x_1, x_2, x_3\}$ is   anticomplete to $\{ x_4, x_5\}$. By (c),  we may assume that $x_4$ is complete to $\{x_1, x_2\}$ and $x_5$ is complete to $\{x_2, x_3\}$. Then $x_1x_5, x_3x_4\notin E(H)$. Note that each $y_j$ is adjacent to at least two vertices in $N(x)$ for each $j\in[3]$. However,  each of $x_1$ and $x_3$ is adjacent to at most one vertex in $\{y_1, y_2, y_3\}$;  each of $x_4$ and $x_5$ is adjacent to at most two  vertices in $\{y_1, y_2, y_3\}$; and  $x_2$ is  anticomplete to  $\{y_1, y_2, y_3\}$.  It follows that $e_H(\{y_1, y_2, y_3\}, N(x))=6$;   and  every $x_i$ is a $5$-vertex and every $y_j$ is a $4$-vertex for each $i\in[5]$ and $j\in[3]$.  We may assume that $x_4$ is complete to $\{y_1, y_2\}$. Then $x_4y_3\notin E(H)$ and so $y_3$ is complete to $\{x_3, x_5\}$.  But then $y_3$ is adjacent to $x_5$ only in 
$H[N(x_3)]$, contrary to (c). We may assume that  \medskip

  \noindent (d)\,  $H[N(v)]$  is $K_3\cup \overline{K}_2$-free for every $5$-vertex $v$.\medskip

 It remains to consider the case 
 $x_4x_5\in E(H)$.   Suppose  $x_i$ is complete to $\{x_4, x_5\}$ for some $i\in[3]$, say $i=3$. We may assume that $x_1x_4\notin E(H)$ because $H[N(x)]$ is $K_4$-free.  By (d) applied to $H[N(x)]$,  we have $x_2x_5\notin E(H)$; in addition, either  $x_1x_5, x_2x_4\notin E(H) $, or $x_1x_5, x_2x_4\in E(H) $.  Since $e_H(\{y_1, y_2, y_3\}, N(x))\ge 6$, we see that  $\{x_1, x_2\}$ is anticomplete to $\{x_4, x_5\}$.  By (a), we may assume that $x_1y_3, x_4y_j\notin E(H)$ for some $j\in[3]$. Then $y_3$ is complete to $\{x_4, x_5\}$. Thus $j\ne 3$. We may assume that $j=1$. Then $y_1$ is complete to $\{x_1, x_2\}$.  Since $H$ is $H_5$-free, we see that $y_2$ is not complete to $\{x_2, x_4\}$. We may assume that    $x_2y_2\notin E(H)$.  Then $y_2$ is complete to $\{x_4, x_5\}$ because $\{x_1, x_2\}$ is anticomplete to $\{x_4, x_5\}$. But then   $H$ contains  $H_6$ as a subgraph, a contradiction.   This proves that no $x_i$ is  complete to $\{x_4, x_5\}$ for each $i\in[3]$. By (c), we may assume that $x_2x_4, x_3x_5\in E(H)$. Then $x_2x_5, x_3x_4\notin E(H)$. By (a), we may assume that $x_5y_3\notin E(H)$. Then $y_3x_2\in E(H)$.  By (d) applied to $H[N(x_2)]$, we have $y_3x_4\in E(H)$. By (a), we may assume that $x_4y_2\notin E(H)$. Then $x_4$ is anticomplete to $\{x_3, y_2\}$, and so $x_3y_2\in E(H)$. Thus $y_1$ is anticomplete to $\{x_2, x_3\}$, and so $y_1$ is complete to $\{x_4, x_5\}$.    But then $x_4$ is a $5$-vertex such that $G[N(x_4)]=C_5$, contrary to (b).  \medskip
 
 This completes the proof of \cref{l:computer}.
 \end{proof}
 
 \begin{figure}[htb]
\centering
\includegraphics[scale=0.16]{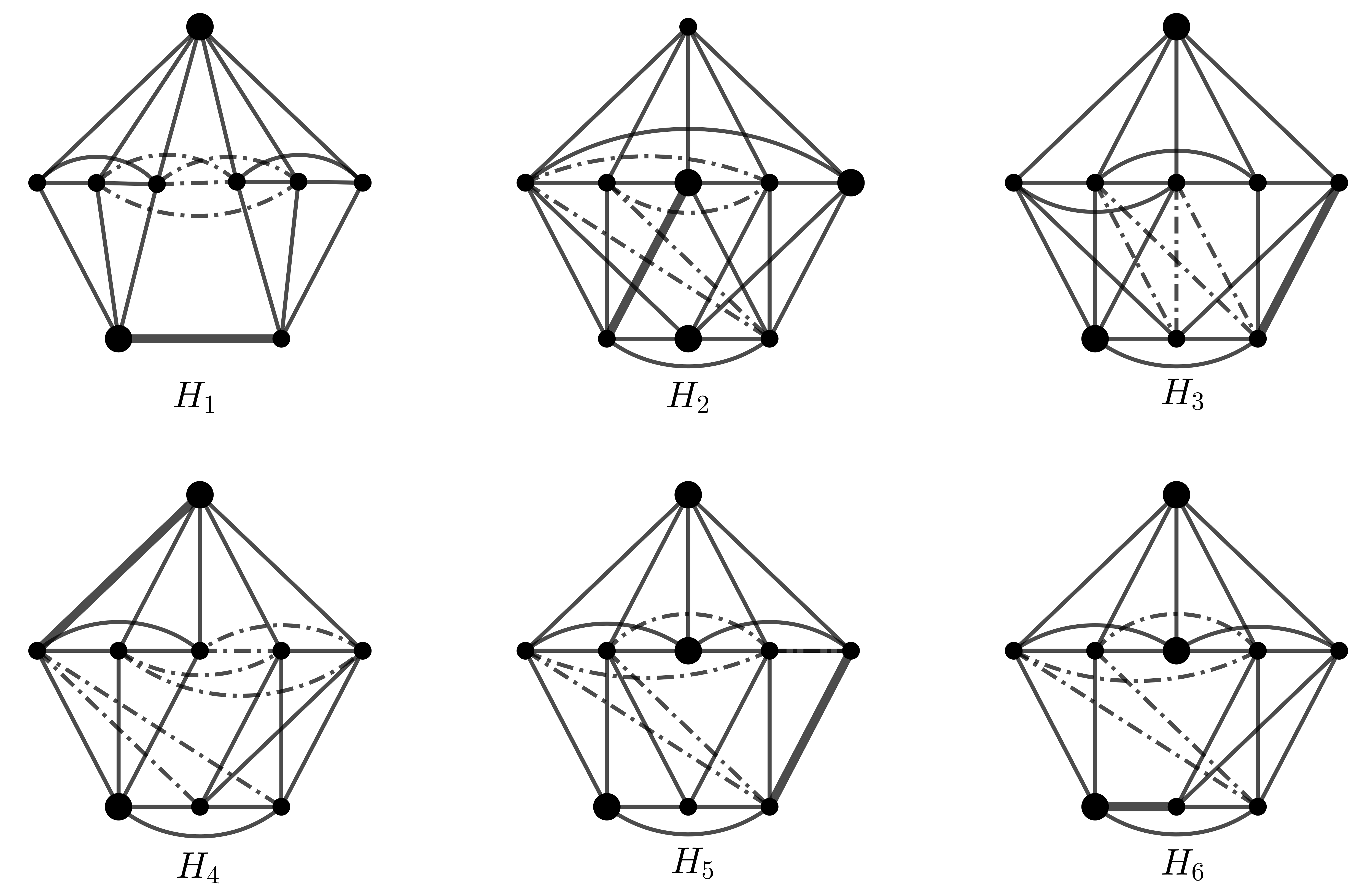}
\caption{Graphs in \cref{fig:counter} with bold vertices and  edges depicted, and dashed edges added.}
\label{fig:wonderful}
\end{figure}

We are now ready to prove Theorem~\ref{t:main}, which we restate for convenience.\main* 

\begin{proof} Suppose the assertion is false. Let $G$ be a  graph with no $\kns$ minor such  that $\chi(G) \ge 9$.  
We may choose such a  graph $G$ so that it is $9$-contraction-critical. Then $\delta(G)\ge8$,    $G$ is $7$-connected  by  \cref{t:7conn}, and     $\delta(G) \le 9$ by \cref{t:exfun}. 
Let $x\in V(G)$ be of minimum degree. Since $G$ is $9$-contraction-critical and has no $\kns$ minor, by  \cref{l:alpha2} applied to $G[N(x)]$, we see that    $\delta(G)=9$ and $\alpha(G[N(x)]) = 2$.  We next prove that $G[N(x)]$ contains a  $5$-clique.
Suppose $G[N(x)]$ is $K_5$-free. By \cref{l:H9},   $G[N(x)]$ contains   a  spanning subgraph isomorphic to one of the graphs in Figure~\ref{fig:counter}.  Let $A$ be the set of all  bold vertices,    $M$ be the set of all dashed edges and $e$ the bold edge  in  each  $H_i$  given in  Figure~\ref{fig:wonderful}.  Since $G[N(x)]$ is $K_5$-free, we see that $A$ is not a clique in $G$. Let $S$ be a set of two nonadjacent vertices in $A$. Note that no vertex in $S$ is incident with any dashed edges in $M$.   By   \cref{l:rootedK4}  applied to  $G[N(x)]$ with $S$ and $M$ given above,  we see that $G[N[x]]+M\se \kns$ by contracting the bold edge  $e$, a contradiction.  This proves that $G[N(x)]$ contains a  $5$-clique for all such $9$-vertices $x$ in $G$.  Let  $n_9$ denote  the number of $9$-vertices   in $G$.   Then $e(G)\ge \big(9n_9+10(|G|-n_9)\big)/2=(10|G|-n_9)/2 $. By \cref{t:exfun}, $5|G|-15\ge e(G)\ge (10|G|-n_9)/2$. It follows that $n_9\ge30$. Then $G$ contains at least  three   pairwise nonadjacent    $9$-vertices in $G$.   Let   $x_1, x_2, x_3\in V(G) $  be  three   pairwise nonadjacent    $9$-vertices in $G$.  For each $i\in[3]$, $G[N(x_i)]$ has a $5$-clique;   let $L_i $ be a $6$-clique  of $G[N[x_i]]$ such that $x_i\in L_i $.    Then 
 \[\min\{|L_1 \setminus (L_2 \cup L_3)|, |L_2 \setminus (L_1 \cup L_3)|, |L_3 \setminus (L_1 \cup L_2)| \} \geq 1.\]
 By  \cref{l:threek6s},    $G\se\kns$, a contradiction.   \medskip

This   completes the proof of  \cref{t:main}.  
 \end{proof}

\section{An extremal function for $\kns$ minors}\label{s:exfun}

 Throughout this section, if  $G$ is a graph and $K$ is a subgraph of $G$, then by $N(K)$ we denote
the set of vertices of $V(G)\less V(K)$ that are adjacent to a vertex of $K$.
 If $V(K)=\{x\}$, then  $N(K)=N(x)$. 
It can be easily checked that for each vertex  $x\in V(G)$, if $K$ is a component of $G\less N[x]$, then $N(K)$ is a minimal separating set of $G$. \medskip

Lemma~\ref{l:k4-} follows from the proof of Lemma~16 of J\o rgensen~\cite{Jor01}. A proof can   be found in~\cite{K84}. 

\begin{lem}[J\o rgensen~\cite{Jor01}]\label{l:k4-} Let $G$ be a $4$-connected graph and let $S\subseteq V(G)$  be a separating set of four vertices. Let $G_1$ and $G_2$ be proper subgraphs of G so that $G_1\cup G_2=G$ and $G_1\cap G_2=G[S]$.   Let $d_1$ be the 
largest integer so that $G_1$ contains pairwise disjoint sets of vertices 
$V_1,  V_2, V_3, V_4$ so that $G_1[V_j]$ is connected, 
 $|S\cap V_j|=1$ for  $1\le j\le  4$, and so that the graph obtained from $G_1$ by contracting each of $G_1[V_1],   G_1[V_2],G_1[V_3],G_1[V_4]$ to a single vertex 
and deleting $V(G)\less \bigcup_{j=1}^4 V_j$ has 
$e(G[S])+d_1$ edges.    If $|G_1|\ge 6$, then   \[e(G[S]) + d_1 \geq 5.\]
\end{lem}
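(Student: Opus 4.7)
My plan is to use the 4-connectivity of $G$ to constrain the structure of $G_1 \less S$ and then construct the four branch sets $V_1, \dots, V_4$ by case analysis.

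The key structural observation is that for every component $C$ of $G_1 \less S$, we must have $N_{G_1}(C) = S$; otherwise $N(C) \subsetneq S$ would be a separator of $G$ of size less than $4$, contradicting 4-connectivity. In particular, every $s_j$ has a neighbor in every component of $G_1 \less S$. Since $|G_1| \ge 6$ and $|S|=4$, we have $|G_1 \less S| \ge 2$, so either $G_1 \less S$ has at least two components, or it is a single connected set of size at least $2$.

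In the two-component case I would pick two distinct components $C$ and $C'$ and set $V_1 := \{s_1\} \cup V(C)$, $V_3 := \{s_3\} \cup V(C')$, $V_2 := \{s_2\}$, $V_4 := \{s_4\}$. Each $V_j$ is connected in $G_1$ because $s_1$ (resp.\ $s_3$) has a neighbor in $C$ (resp.\ $C'$), and one verifies directly that the five pairs $\{V_1,V_2\}$, $\{V_1,V_3\}$, $\{V_1,V_4\}$, $\{V_2,V_3\}$, $\{V_3,V_4\}$ each contain an edge (supplied by a neighbor of the relevant $s_j$ in $C$ or $C'$), so $e(G[S]) + d_1 \ge 5$ regardless of $e(G[S])$.

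When $G_1 \less S$ is a single connected set $C$ with $|C| \ge 2$, a first attempt is to merge all of $C$ with one $s_j$: setting $V_j := \{s_j\} \cup V(C)$ and $V_i := \{s_i\}$ for $i \ne j$ produces $3 + e(G[S \less \{s_j\}])$ minor edges, which reaches $5$ as soon as some $j$ satisfies $e(G[S \less \{s_j\}]) \ge 2$. A short calculation shows this succeeds unless $G[S]$ has at most two edges and, in the two-edge case, those edges form a perfect matching of $S$. In the remaining sparse-$G[S]$ subcases I would split $C$ into two connected pieces $C', C''$ and take $V_1 := \{s_1\} \cup C'$, $V_2 := \{s_2\} \cup C''$, $V_3 := \{s_3\}$, $V_4 := \{s_4\}$ (or a symmetric variant); to get five minor edges one needs each piece adjacent to both $s_3$ and $s_4$.

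The main obstacle will be this final regime where $e(G[S])$ is small and $s_3$ or $s_4$ has only one neighbor in $C$, so a naive split of $C$ is not adjacent to all of $\{s_3,s_4\}$ on both sides. Here I would exploit the minimum-degree condition from 4-connectivity---every $v \in C$ has $d_G(v) \ge 4$ and hence at least $4-(|C|-1)$ neighbors in $S$---together with a Menger-type argument inside $G_1$ to produce either the desired partition of $C$ or a rerouting that still delivers five minor edges. For the smallest cases ($|C| = 2$ or $3$) I expect a direct enumeration of the possible attachment patterns to $S$ to close the argument.
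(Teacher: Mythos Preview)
The paper does not actually prove this lemma: it attributes the statement to J\o rgensen~\cite{Jor01} and points to~\cite{K84} for a written proof. So there is no in-paper argument to compare against; I can only assess your outline on its own merits.

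Your reduction is correct as far as it goes. The observation $N_{G_1}(C)=S$ for every component $C$ of $G_1\setminus S$ is exactly what $4$-connectivity gives, your multi-component construction really does yield five minor edges, and the ``merge all of $C$ with one $s_j$'' trick together with the pigeonhole on $e(G[S\setminus\{s_j\}])$ correctly disposes of every case except $e(G[S])\le 1$ and the perfect-matching case $e(G[S])=2$.

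What you label the ``main obstacle'' is a genuine gap, not a detail. In the remaining regime your proposed split $V_1=\{s_1\}\cup C'$, $V_2=\{s_2\}\cup C''$ with $V_3=\{s_3\}$, $V_4=\{s_4\}$ singletons requires each of $s_3,s_4$ to have a neighbour in \emph{both} pieces of $C$, but nothing you have established prevents, say, $|N(s_3)\cap V(C)|=1$. The degree bound $d_G(v)\ge 4$ becomes vacuous once $|C|\ge 5$, and ``a Menger-type argument inside $G_1$'' does not yet specify sources, sinks, or what separator you would contradict. The fix is real but not short: one must choose the partition of $C$ adaptively, typically attaching pieces of $C$ to the vertices of $S$ with \emph{fewest} neighbours in $C$ first (rather than fixing which two $s_j$ stay singletons in advance), and then argue that the remaining two $s_j$'s still see both sides. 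Your plan is salvageable along these lines, but as written the final case is an outline, not a proof.
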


We next prove  a lemma that will be needed in the proof of Theorem~\ref{t:exfun}. 

\begin{lem}
\label{l:computer}
Let $H$ be a graph  on eight or nine vertices. If  $\delta(H) \geq 5$, then
  $H$ has a vertex $v$ such that $H \setminus v$ has a $ \km{7}{6}$ minor.
\end{lem}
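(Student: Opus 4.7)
The plan is to split on $|H|\in\{8,9\}$, and within the 9-vertex case, on the clique number $\omega(H)$.

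For $|H|=8$, I would take $v$ to be a minimum-degree vertex. Then $e(H\less v)=e(H)-\delta(H)\ge 4\delta(H)-\delta(H)=3\delta(H)\ge 15$, so $H\less v$ is a 7-vertex graph with at most $\binom{7}{2}-15=6$ non-edges and therefore contains a spanning subgraph in $\km{7}{6}$, which trivially gives the required minor.

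For $|H|=9$ (so $e(H)\ge 23$), the two easy subcases are $\omega(H)\ge 5$. If $\omega(H)\ge 6$, pick any $v$ outside some $K_6$: then $H\less v$ contains that $K_6$ together with a further vertex, yielding $\ge \binom{6}{2}=15$ edges on $7$ vertices. If $\omega(H)=5$, fix a $K_5$ $C$ and let $W=V(H)\less V(C)$, so $|W|=4$. No $w\in W$ is complete to $C$ (else $K_6$), and $d(w)\ge 5$ combined with at most $3$ $W$-neighbors forces $2\le d(w,C)\le 4$. Pick $v\in W$ and set $W\less\{v\}=\{w_1,w_2,w_3\}$: if some $w_iw_j\in E(H)$, then $H[C\cup\{w_i,w_j\}]\subseteq H\less v$ already has $\ge 10+2+2+1=15$ edges on $7$ vertices; if $\{w_1,w_2,w_3\}$ is independent, then each $w_i$ has at most one $W$-neighbor (namely $v$), so $d(w_i,C)\ge 4$ and $H[C\cup\{w_1,w_2\}]$ has $\ge 10+4+4=18$ edges. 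Either way $H\less v\se\km{7}{6}$.

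The main obstacle is $\omega(H)\le 4$, i.e.\ $H$ is $K_5$-free. (Note $\omega(H)\ge 3$ since a triangle-free $9$-vertex graph has at most $\lfloor 81/4\rfloor=20<23$ edges.) The tightest regime is $e(H)=23$, with degree sequence eight $5$s and one $6$: here vertex deletion alone cannot suffice, since removing any two vertices loses at least $5+5-1=9$ edges, leaving $\le 14$. The plan is then to pick $v$ of minimum degree (so $e(H\less v)=18$) and to locate an edge $uw\in E(H\less v)$ with $c:=|N_{H\less v}(u)\cap N_{H\less v}(w)|\le 2$, after which $(H\less v)/uw$ is a $7$-vertex minor with $e(H\less v)-1-c\ge 15$ edges. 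Existence of such an edge will follow from the identity $\sum_{uw\in E(H\less v)}|N(u)\cap N(w)|=3\,t(H\less v)$ together with a Kruskal--Katona-style bound on triangles in $K_5$-free $8$-vertex graphs of $18$ edges, and the finitely many remaining small-clique-number configurations can be settled by direct inspection (as the lemma label itself suggests can be done by computer verification). The expected difficulty is organizing this last regime so that every subcase is handled uniformly.
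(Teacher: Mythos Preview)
Your $|H|=8$ case and the $\omega(H)\ge 5$ subcases for $|H|=9$ are correct. The gap is the $K_5$-free nine-vertex case: you fix a minimum-degree $v$ and then assert that a light edge of $H\setminus v$ exists via ``a Kruskal--Katona-style bound'' plus ``direct inspection,'' but supply neither. Kruskal--Katona alone bounds the triangle count from $18$ edges by roughly $27$, far above the $17$ you would need; the $K_5$-freeness must enter, and you have not said how. Despite the lemma's label, the paper's proof is a short hand argument with no enumeration.

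The paper avoids the $\omega$-split entirely and reverses your order of operations. After passing to an edge-minimal $H$ with $\delta(H)=5$, it shows some edge of $H$ itself lies in at most two triangles: if not, then for a $5$-vertex $x$ one has $\delta(H[N(x)])\ge 3$, so $H[N(x)]\supseteq K_5^=$; any $y\notin N[x]$ has at most two neighbours outside $N[x]$ and hence at least three in $N(x)$, so $H[N[x]\cup\{y\}]$ already has $\ge 16$ edges on seven vertices and sits inside $H\setminus y'$ for either remaining vertex $y'$. Given a light edge $uw$, set $H^*:=H/uw$, an $8$-vertex graph with $e(H^*)\ge 20$; if $\delta(H^*)\ge 5$ the eight-vertex case applies to $H^*$, and if $\delta(H^*)\le 4$, deleting a $\le 4$-vertex of $H^*$ leaves $\ge 16$ edges on seven vertices. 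Either way the resulting $\km{7}{6}$ minor lives in $H\setminus v$ for an appropriate $v\in V(H)$. Your delete-then-contract scheme can in fact be salvaged by the same neighbourhood observation (if every edge of $H\setminus v$ is in $\ge 3$ triangles, any vertex of degree $\le 4$ there would carry a $K_4$ in its neighbourhood, so $K_5$-freeness forces $\delta(H\setminus v)\ge 5$ and the eight-vertex case applied to $H\setminus v$ yields the minor in some $H\setminus v'$), but contracting first makes the clique-number split unnecessary.
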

\begin{proof}    We may assume that  $\delta(H) =5$ and every edge   is incident with a $5$-vertex in $H$.   Suppose $H \setminus v $ has no  $\km{7}{6}$ minor for every $v\in V(H)$.  Then  $|H|=9$, else for any   $5$-vertex $v$ in $H$,     $e(H\less v)\ge 20-5=e(K_7)-6$, and so $H\less v$ has a $\km{7}{6}$ minor,  a contradiction.     We claim that  some  edge in $H$ belongs to at most two triangles. Suppose not. Then every edge in $H$ belongs to at least three triangles. Let $x$ be a $5$-vertex in $H$. Then $\delta(H[N(x)])\ge3$, and so $H[N(x)]$ contains $K_5^=$ as a spanning subgraph; in addition, every vertex in $H\less N[x]$ is adjacent to at least three vertices in $N(x)$. It follows that $H[N[x]\cup\{y\}]\se \km{7}{5}$, where $y\in V(H)\less N[x]$, a contradiction.  Thus there exists an edge $uw\in E(H)$ such that $uw$ belongs at most two triangles. Let $H^*:=H/uw$. Then $e(H^*)\ge e(H)-3\ge 23-3=20$. Note that $|H^*|=8$. Similar to the case when $|H|=8$,   we see that  if $\delta(H^*)\ge5$, then $H^*\less v$ has a $\km{7}{6}$ minor for any   $5$-vertex $v$ in $H$. Thus $\delta(H^*)\le 4$. 
Let $v\in V(H^*)$ such that $d_{H^*}(v)\le4$.  Then  $ e(H^*\less v)\ge 20-4=e(K_7)-5$, and so  $  H^*\less v$ has a  $\km{7}{5}$ minor, a contradiction. \end{proof}

We are now ready to prove Theorem~\ref{t:exfun}, which we restate for convenience. \exfun*

 \begin{proof} Suppose the assertion is false. Let $G$ be a graph on   $n \geq 9$ vertices with  $ e(G) \geq  5n-14$ and, subject to this, $n$ is minimum. We may assume that $e(G) = 5n-14$.
It is straightforward  to check that $G\se  \kns$ when $n=9$.  Thus  $n \geq 10$.  We next prove several claims. \medskip

\setcounter{counter}{0}
\noindent {\bf Claim\refstepcounter{counter} \label{c:d5} \arabic{counter}.}  
 $\delta(G) \geq 6$.
 
\begin{proof} Suppose $\delta(G)\le 5$.  Let $x\in V(G)$  with $d(x)\le 5$. Then 
	\[e(G \setminus x)  =e(G)-d(x)\ge (5n-14)-5   =5(n-1)-14.\] 
	Thus  $G \setminus x $ has a $ \kns$ minor by the minimality of $G$,   a contradiction.
\end{proof}

\noindent {\bf Claim\refstepcounter{counter}\label{c:triangles} \arabic{counter}.}  
Every edge in $G$ belongs to at least five triangles. Moreover, $G[N[x]]=K_7$  if $x\in V(G)$ is a $6$-vertex,  and $G[N[x]]$ contains a $K_8^\equiv$  subgraph if $x\in V(G)$ is a $7$-vertex.
 
\begin{proof}
 Suppose there exists   an edge  $uv \in E(G)$ such that  $uv$ belongs to at most four  triangles. Then
	\[e(G / uv) \ge (5n-14)-5   =5|G/uv| -14.\]  
	Thus $G /uv\se \kns$ by the minimality of $G$, a contradiction.  Since every edge in $G$ belongs to at least five triangles, we see that  $G[N[x]]=K_7$ for each $6$-vertex $x$  in $G$, and $G[N[y]]$ contains a $K_8^\equiv$  subgraph for each $7$-vertex $y$  in $G$.  \end{proof}
 
  \noindent {\bf Claim\refstepcounter{counter}\label{c:n12} \arabic{counter}.}  
$n\ge12$.
 
\begin{proof}
Suppose $10\le n\le 11$. Let $x\in V(G)$ be a vertex of degree $\delta(G)$. Then $d(x)\le 7$ because $e(G) = 5n-14$. By Claim~\ref{c:d5}, $6\le d(x)\le 7$.
Suppose $ d(x)=7$.  Then    $G[N[x]]$ contains a $K_8^\equiv$ subgraph by Claim~\ref{c:triangles}, and every vertex  in $  V(G)\less N[x]$ is adjacent to at least  five  vertices in $N(x)$.  But then   $G\se G[\{v\}\cup N[x]] \se\kns$ for each $  v\in V(G)\less N[x]$, a contradiction.  Thus  $d(x)=6$.  Then $G[N[x]]=K_7$ by Claim~\ref{c:triangles}. Let $y, z\in V(G)\less N[x]$. Then $e_G(\{y, z\}, N(x))\ge 2(6-(n-8))=28-2n$. If   $e_G(\{y, z\}, N(x))\ge 9$, or $e_G(\{y, z\}, N(x))\ge 8$ and $yz\in E(G)$,  then    $G[\{y, z\}\cup N[x]]\se \kns$, a contradiction. Thus $  e_G(\{y, z\}, N(x)) =8 $ and $yz\notin E(G)$, or $6\le  e_G(\{y, z\}, N(x)) \le 7 $.  In the former case, there exists $w\in V(G)\less N[x]$ such that $w$ is complete to $\{y,z\}$. But then $G[\{y, z,w\}\cup N[x]]/wz\se \kns$. Thus $6\le  e_G(\{y, z\}, N(x)) \le 7 $ for any two vertices $y, z\in V(G)\less N[x]$. It follows that $n=11$,    $V(G)\less N[x]$ is a $4$-clique,  no vertex of $V(G)\less N[x]$ has degree at least eight, and  at least three vertices of $V(G)\less N[x]$ are $6$-vertices in $G$. Thus $e_G(V(G)\less N[x], N(x))\le 12+1=13$. But then \[e(G)=e(G[N[x]])+e_G(V(G)\less N[x], N(x))+e(G\less N[x])\le 21+13+6< 5\times 11-14,\]  which is impossible. \end{proof}

\noindent {\bf Claim\refstepcounter{counter}\label{c:55} \arabic{counter}.}  
No three  $6$-vertices  in $G$ are pairwise adjacent.
 
\begin{proof} 
Suppose  there exist three distinct $6$ vertices, say $x, y, z$, in $G$ such that $\{x,y,z\}$ is a $3$-clique. 
Then $|G \setminus \{ x, y,z \}|=n-3 \geq 9$   by Claim~\ref{c:n12},   and  \[ e(G \setminus \{ x, y,z \}) = e(G)-15=     5(n-3) - 14.\] 
 Thus $G \setminus \{ x, y,z \}$ has a  $\kns$ by the minimality of $G$, a contradiction.  
 \end{proof}

Let $S$ be a minimal separating set of vertices in $G$, and let $G_1$ and $G_2$ be proper subgraphs of $G$ so 
that $G=G_1\cup G_2$ and $G_1\cap G_2=G[S]$. 
For each $i\in[2]$,   let $d_i$ be the 
largest integer so that $G_i$ contains pairwise disjoint sets of vertices 
$V_1,   \dots, V_p$ so that $G_i[V_j]$ is connected, 
 $|S\cap V_j|=1$ for  $1\le j\le p :=|S|$, and so that the graph obtained from $G_i$ by contracting each of $G_i[V_1],   \dots, G_i[V_p]$ to a single vertex 
and deleting $V(G)\less \bigcup_{j=1}^p V_j$ has 
$e(G[S])+d_i$ edges.   It follows from the minimality of $G$ that for each $i\in[2]$, 
\[ e(G_i) + d_{3-i} \le 5|G_1| - 15 \,\,  \text{ if } |G_i|\ge9.\tag{$ \bigstar$}\]    
 
\noindent {\bf Claim\refstepcounter{counter}\label{c:nodis7} \arabic{counter}.}   
If $|G_i|=8$ for some $i\in [2]$, then    $|S|\le 4$ and     some vertex in $V(G_i)\less S$ is a $7$-vertex in $G$.  
\begin{proof}
Suppose, say,  $|G_1| = 8$.    Let $C$ be a component of $G_2 \setminus S$.  We first prove that  $G_1\less S$ is  connected. Suppose not. By Claims~\ref{c:d5} and \ref{c:triangles},    $G_1\less S$ must contain two nonadjacent $6$-vertices in $G$ with   $G[S]=K_6$. Thus      $G_1=K_8^-$ and so  $G\se \km{9}{3}$ by contracting   $C$   to  a single vertex, a contradiction.   Thus    $G_1\less S$ is  connected. We next prove that some vertex  in $V(G_1)\less S$ is a   $7$-vertex in $G$. Suppose no vertex in $V(G_1)\less S$ is a  $7$-vertex in $G$. Then every vertex in $V(G_1)\less S$ is a  $6$-vertex in $G$.    By  Claim~\ref{c:triangles} and the fact that $G_1\less S$ is connected, we see that $V(G_1)\less S$ is a clique of order $8-|S| $. By Claim~\ref{c:55}, $|V(G_1)\less S|\le 2$. By the minimality of $S$, every vertex in $S$ is adjacent to at least one vertex in $V(G_1)\less S$. It follows that   $|V(G_1)\less S|= 2$  and $G_1=K_8^-$. But then $G\se\km{9}{2}$ by contracting   $C$   to  a single vertex, a contradiction.  Finally, let $x\in V(G_1)\less S$ be a $7$-vertex in $G$. By  Claim~\ref{c:triangles}, $G_1=G[N[x]]$ contains $K_8^\equiv$ as a spanning subgraph. Thus $|S|\le 4$, else $G\se\kns$ by contracting $C$ to a single vertex.  \end{proof}

\noindent {\bf Claim\refstepcounter{counter}\label{c:no7} \arabic{counter}.}  
Neither $G_1$ nor $G_2$ has exactly eight vertices.
 
\begin{proof}
Suppose not, say $|G_1| = 8$.  By Claim~\ref{c:nodis7},    $|S|\le 4$ and some vertex, say $x$,  in $V(G_1)\less S$ is a $7$-vertex in $G$. By Claim~\ref{c:triangles},  $G_1$   contains    $K_8^\equiv$ as a spanning subgraph.    
We next prove that $|G_2|\ge 9$. Suppose $  |G_2|\le 8$. Note that $|G_2|\ge7$ by Claim~\ref{c:d5}. If   $|G_2|=7$, then every vertex in $G_2\less S$ is a $6$-vertex. By Claims~\ref{c:triangles}, $G_2=K_7$, but then  $V(G_2)\less S$ is a clique of order $7-|S|\ge3$ because $|S|\le 4$, contrary to Claim~\ref{c:55}.    
Thus  $|G_2|=8$ and $n=16-|S|$. By Claim~\ref{c:nodis7},   some vertex, say $y$,  in $V(G_2)\less S$ is a $7$-vertex in $G$.  By Claim~\ref{c:triangles},  $G_2$ contains     $K_8^\equiv$ as a spanning subgraph.    
 Suppose $  |S|=4$. Then $G[S]=K_4$, else $G\se \kns$ by contracting $G_2\less (S\cup\{y\})$ onto an end of a missing edge of $G[S]$.   Thus $G_1=G_2=K_8^\equiv$, else say $G_1$ contains $K_8^=$ as a spanning subgraph, then $G[V(G_1)\cup\{y\}]\se\kns$. But then $n=16-4=12$ and $e(G)=e(G_1)+e(G_2)-6=50-6=44<5\times 12-14$, a contradiction.  Suppose next $ |S|= 3$.  Then $n=13$;  moreover, $G_1, G_2\in\{K_8^=, K_8^\equiv\}$, else say  $G_1 $ contains $K_8^-$ as a subgraph, then  $G[V(G_1)\cup\{y\}]\se \kns$. But then $e(G[S])\ge2$ and $e(G)=e(G_1)+e(G_2)-e(G[S]) \le 26+26-2 =50<5\times 13-14$, a contradiction. Thus $|S|\le 2$. Then \[ 5(16-|S|)-14 = e(G)=e(G_1)+e(G_2)-e(G[S])\le 28+28-e(G[S]).\] 
 It follows that $|S|=2$, $G_1=G_2=K_8$ and $G[S]=\overline{K}_2$, which is impossible.
This proves that $|G_2|\ge 9$. 
 \medskip

 Recall that  $G_1$ contains $K_8^\equiv$ as a subgraph.  It is easy to check that     $e(G[S])+d_1={{|S|}\choose2}$.    Note that if $|S|=4$, then $G_1=K_8^\equiv$, else $G\se \kns$ by contracting a component of $G_2\less S$ to a single vertex.     But then \begin{align*}
e(G_2)+d_1&=e(G)-e(G_1)+e(G[S])+d_1\\
&\ge 5n-14 -\big(28-\max\{0, 3(|S|-3)\}\big)+ {{|S|}\choose2} \\
&=  \big(5\times (n-(8-|S|))-14\big) +   5\times (8-|S|)  -\big(28-\max\{0, 3(|S|-3)\}\big)+{{|S|}\choose2}\\
&=(5 |G_2|-14) +   5\times (8-|S|)  -\big(28-\max\{0, 3(|S|-3)\}\big)+{{|S|}\choose2}\\
&\ge  5|G_2|-14, 
\end{align*}
 contrary to ($ \bigstar$) because $  |S|\le 4$ and $|G_2|\ge 9$.  
\end{proof}

Observe that, if  $|G_1|\ge 9$ and $|G_2|\ge 9$, then by ($ \bigstar$), we have 
\begin{align*}
5n - 14= e(G) &= e(G_1) + e(G_2) - e(G[S])  \\
& \le (5|G_1|-15-d_2)+(5|G_2|-15-d_1)- e(G[S]) \\
&= 5(n + |S|) - 30 - d_1 - d_2 - e(G[S]).
 \end{align*}
It follows that 
\[ 5|S| \ge 16 +  d_1 + d_2 + e(G[S])  \, \, \text{ if } |G_1|\ge 9\,  \text{ and } |G_2|\ge 9. \tag{$\blacklozenge$} \] \medskip

\noindent {\bf Claim\refstepcounter{counter}\label{c:not77} \arabic{counter}.}  
If $|G_i|=7$, then     $|G_{3-i}|\ge9$ for each $i\in[2]$. Moreover, $G[S]=K_5$ or  $G[S]=K_6$.

\begin{proof}
Suppose  $|G_1|=7$ but $|G_2|\le8$. By Claim~\ref{c:no7}, $|G_2|=7$.  
By Claim~\ref{c:triangles}, $G_1=G_2=K_7$, and every  vertex in  $V(G_1)\less S$ is a $6$-vertex in $G$. By Claim~\ref{c:55}, $|V(G_1)\less S|\le 2$ and so $|S|\ge 5$. But then $n=14-|S|\le 9$,  contrary to Claim~\ref{c:n12}.   Since $G_1= K_7$ and $1\le |V(G_1)\less S|\le 2$, we see that $G[S]=K_5$ or  $G[S]=K_6$.  
\end{proof}

\noindent {\bf Claim\refstepcounter{counter}\label{c:5conn}  \arabic{counter}.}  
$G$ is $5$-connected.
 
\begin{proof} Suppose $G$ is not 5-connected. Let $S$ be a minimal separating set of $G$, and  $G_1, G_2,   d_1, d_2$ be defined as  prior to ($ \bigstar$).  By Claim~\ref{c:not77}, 
  $|G_1|\ne 7$ and  $|G_2|\ne 7$. By Claim~\ref{c:no7}, $|G_1|\ge 9$ and $|G_2|\ge 9$. By ($\blacklozenge$), 
  $|S| \geq 4$, and so $G$ is $4$-connected. By Lemma~\ref{l:k4-}, $e(G[S])+d_1\ge5$. Note that $d_2\ge 1$ when $S$ is not a $4$-clique, and $e(G[S])=6$ when  $S$ is  a $4$-clique. In either case, we have $d_1 + d_2 + e(G[S])\ge6$, contrary to ($\blacklozenge$). \end{proof}

\noindent {\bf Claim\refstepcounter{counter}\label{c:almostclique}   \arabic{counter}.}  
  If there exists $x \in S$ such that $S \setminus x$ is a clique, then $G[S] = K_5$ or $G[S] = K_6$.
 
\begin{proof} Suppose  $S \less x$ is  a clique but $G[S]\ne K_5$ and $G[S]\ne K_6$.   Let $G_1$ and $G_2$ be as above. By Claim~\ref{c:not77}, 
  $|G_1|\ne 7$ and  $|G_2|\ne 7$. By Claim~\ref{c:no7}, $|G_1|\ge 9$ and $|G_2|\ge 9$.   By Claim~\ref{c:5conn}, $|S| \geq 5$. 
If $S$ contains a $7$-clique, then $G\se K_9^-$  by contracting a component of $G_1 \setminus S$  and  a component of $G_2 \setminus S$  to two    distinct  vertices, a contradiction. Thus    $5\le |S|\le 7$  and $S$ is not a clique.
Then  $\delta(G[S]) =d_{G[S]}(x)\leq |S| - 2$.
Since  $S\less x$ is a clique, we see that \[ d_1 = d_2 = |S| - 1 - d_{G[S]}(x)=|S| - 1 -\delta(G[S]). \]
It follows that  \[ e(G[S]) ={{|S|-1}\choose 2}+d_{G[S]}(x)= {{|S|-1}\choose 2}+\delta(G[S]). \] This, together with ($\blacklozenge$), implies that 
	\begin{align*}
	5|S| &\ge  16+d_1 + d_2 + e(G[S]) \\
	&= 16 + 2(|S| - 1 -\delta(G[S])) + {{|S|-1}\choose 2}+\delta(G[S]) \\
	&= 16+2(|S|-1)+(|S|^2 -3|S| + 2)/2 -  \delta(G[S]) \\
	&\ge 15+ (|S|^2 +|S|)/2   - (|S|-2)\\
	&=17+(|S|^2 -|S|)/2, 
	\end{align*}
which is impossible because $5\le |S|\le 7$.
\end{proof}

\noindent {\bf Claim\refstepcounter{counter}\label{c:k72}   \arabic{counter}.}  
  $G$ is  $\ket$-free.
 
\begin{proof}
Suppose $G$ has a subgraph $H$ such that $H\in \ket$.  Since $G$ is $5$-connected, 
 we obtain a $\kns$ minor in $G$ by contracting a component of $G \setminus V(H)$  to a single vertex, a contradiction. \end{proof}

\noindent {\bf Claim\refstepcounter{counter}\label{c:dnot6}   \arabic{counter}.}  
No vertex in $G$ is a $7$-vertex.
 
\begin{proof}
Suppose to the contrary that  $G$ has a $7$-vertex, say $x$.
By Claim~\ref{c:triangles}, $G[N[x]]$  contains  $K_8^\equiv$ as a spanning subgraph, contrary to Claim~\ref{c:k72}.
\end{proof}

\noindent {\bf Claim\refstepcounter{counter}\label{c:onlyone5}   \arabic{counter}.}  
$G$ has at most  two  $6$-vertices. Moreover, if $G$ has exactly two $6$-vertices, then they must be adjacent. 
 
\begin{proof}
Suppose to the contrary that $G$ has  two  distinct $6$-vertices, say $x, y$, such that $xy\notin E(G)$.
   By Claim~\ref{c:triangles},  $N[x]$ and $N[y]$   are  $7$-cliques in $G$. Then 
  $|N(x) \cap N(y)|\le 5$, else  $G[N[x] \cup N[y]]=K_8^-$, contrary to Claim~\ref{c:k72}.  
By Claim~\ref{c:5conn} and Menger's Theorem, there exist five pairwise internally vertex-disjoint $(x, y)$-paths, say  $Q_1, \ldots, Q_5$. We choose  $Q_1, \ldots, Q_5$ so that $|Q_1|+ \cdots+|Q_5|$ is as small as possible. Then each $Q_i$ contains exactly one vertex in $N(x)$ and exactly one in $N(y)$. It follows that  $G\se\km{9}{4}$ by  contracting all the edges of   $Q_1\less\{x, y\},  \ldots,  Q_5\less\{x, y\}$,   a contradiction.  This proves that  $6$-vertices  are pairwise adjacent in $G$. By Claim~\ref{c:55},  $G$ has at most  two  $6$-vertices.\end{proof}

\noindent {\bf Claim\refstepcounter{counter}\label{c:58}   \arabic{counter}.}  
No $6$-vertex   is adjacent to an $8$-vertex or  $9$-vertex   in $G$.
 
\begin{proof}
Suppose to the contrary that there exists $xy \in E(G)$ such that $d(x) = 6$ and $d(y) \in\{8,9\}$. 
By Claim~\ref{c:triangles},  $G[N[x]]=K_7$,    $N[x]\subseteq N[y]$ and 
$\delta(G[(N(y)])\ge5$. Then $d(y)=9$, otherwise $G\se G[N[y]]\se  \km{9}{4}$, a contradiction. Let $ A:=N[y]\less N[x]$. Then $|A|=3$ because $xy\in E(G)$.  Let  $A:=\{a_1, a_2, a_3\}$.  
Then either $e(\{a_1, a_2\}, N[x])\ge 10$, or $e(\{a_1, a_2\}, N[x])\ge 8$ and $a_1a_2\in E(G)$.  But then $G[N[y]\less a_3]\se\kns$ in both cases, a   contradiction.   
\end{proof}

\noindent {\bf Claim\refstepcounter{counter}\label{c:disconnected}   \arabic{counter}.}   Let $x\in V(G)$ be an $8$-vertex or $9$-vertex in $G$, and let $M$ be  the set of vertices of $N(x)$ not adjacent to all other vertices of $N(x)$.  
Then there is no component $K$ of $G \setminus N[x]$    such that $M\subseteq N(K)$.
In particular, $G \setminus N[x]$ is  disconnected if $x$ is an $8$-vertex.

\begin{proof}
Suppose such a component $K$ exists. Then every vertex in $M$ has a neighbor in $K$ because $M\subseteq N(K)$. 
 By Lemma~\ref{l:computer}, there exists  $y \in N(x)$ such that $G[N(x)] \setminus y$ has a $\km{7}{6}$ minor.
If $y\notin M$, then $y$ is complete to $N[x]\less y$ and so $G[N[x]]\se\kns$, a contradiction.
Thus $y\in M\subseteq N(K)$. 
By contracting $K$ onto $y$, we  obtain  a $\kns$ minor in $G$, a contradiction. This proves that no  such  component $K$ exists.  Suppose  $x$ is an $8$-vertex. By Claims~\ref{c:dnot6}  and \ref{c:58}, every vertex in $M$ has degree at least eight, and thus  every vertex in $M$  has a neighbor in  $G \setminus N[x]$. It follows that $G \setminus N[x]$ is disconnected, as desired. 
\end{proof}

 \noindent {\bf Claim\refstepcounter{counter}\label{c:dnot8}   \arabic{counter}.}  
No vertex in $G$ is an $8$-vertex.
 
\begin{proof}
Suppose to the contrary that  $G$ has an $8$-vertex, say $x$. Suppose $G[N(x)]$ has a $5$-clique, say $A$. By Claim~\ref{c:triangles}, $\delta(G[N(x)])\ge 5$.  It is straightforward to check that $e(G[N[x]])\ge 30=e(K_9)-6$, and so  $G[N[x]]\se\kns$, a contradiction. Thus  $G[N(x)]$  is $K_5$-free.  By Claim~\ref{c:disconnected},  let $C$ and $C'$ be two distinct  components of $G \setminus N[x]$. By Claim~\ref{c:5conn}, $|N(C)|\ge5$ and $|N(C')|\ge5$. By Claim~\ref{c:almostclique} and the fact $G[N(x)]$  is $K_5$-free, we see that 
 each of $G[N(C)]$ and $G[N(C')]$  has two independent missing edges. Let $yz$ and $uv$ be a  missing edge  of $G[N(C)]$ and $G[N(C')]$, respectively, such that $ y\ne u, v$. Note that $e(G[N[x]])\ge 8+20=e(K_9)-8$. But then  $G\se\kns$ by contracting $C$   onto  $y$ and $C'$ onto $u$,   a  contradiction. 
\end{proof}

%

\noindent {\bf Claim\refstepcounter{counter}\label{c:comporder1}   \arabic{counter}.}  
Let $x\in V(G)$ be a  $9$-vertex in $G$. 
Then  $G\less N[x]$ is disconnected. Moreover, $|C|\ge2$ for every  component  $C$ of $G \setminus N[x]$.  

\begin{proof} Suppose  $G\less N[x]$ is connected. Let $M$ be  the set of vertices of $N(x)$ not adjacent to all other vertices of $N(x)$.   
By Claims~\ref{c:dnot6},  \ref{c:58} and \ref{c:dnot8}, every vertex in $M$ has degree at least nine, and thus  every vertex in $M$  has a neighbor in  $K:=G \setminus N[x]$. But then $M\subseteq N(K)$, contrary to Claim~\ref{c:disconnected}. \medskip

Next suppose there  exists  a component $C$ of  $G \setminus N[x]$    such that $|C|=1$.   Let $y$ be the only vertex in $C$.  Suppose $y$ is not a $6$-vertex in $G$. Then $d(y)\ge 9$  by Claims~\ref{c:dnot6} and  \ref{c:dnot8}, and so   $N(C)=N(x)$, contrary to Claim~\ref{c:disconnected}.  Thus   $y$ is  a $6$-vertex in $G$. By Claim~\ref{c:triangles},  $G[N[y]]=K_7$. But then $G[ \{x\}\cup N[y]]=K_8^-$, contrary to Claim~\ref{c:k72}.
\end{proof}

\noindent {\bf Claim\refstepcounter{counter}\label{c:comp9}    \arabic{counter}.}  
 Let $x\in V(G)$ be a  $9$-vertex in $G$. 
Then  for every  component  $C$ of $G \setminus N[x]$, there exists      a vertex $v\in V(C)$ such that  $d_G(v)=9$.  

\begin{proof}
Suppose  there exists a component $C$  of $G \setminus N[x]$  such that $d_G(v)\ne 9$ for every $v\in V(C)$.  By Claim~\ref{c:comporder1}, $|C|\ge2$.  
Observe that if  all vertices in $V(C)$ are  $6$-vertices in $G$, then $|C|=2$ by Claim~\ref{c:55} and    $G[V(C)\cup N(C)]=K_7$ by Claim~\ref{c:triangles};  thus  $G[\{x\}\cup V(C)\cup N(C)]$ is not $\km{8}{3}$-free, contrary to Claim~\ref{c:k72}. Thus there exists a vertex  $y\in V(C)$ such that $d_G(y)\ne 6$.   Since $d_G(v)\ne 9$ for every $v\in V(C)$,  by Claims~\ref{c:dnot6} and  \ref{c:dnot8},   we have $d_G(y)\ge10$.  \medskip

Let $G_1 := G \setminus V(C)$ and $G_2 := G[V(C) \cup N(C)]$. Note that  $|G_2|\ge11$ and $N(C)$ is a minimal separating set of $G$. By  Claim~\ref{c:disconnected}, $N(C)\ne N(x)$. 
Thus $|C|\ge 11-8=3$.   Let  $d_1$   be defined as in the paragraph prior to Claim~\ref{c:nodis7}. 
Let $z \in N(C)$  such that  $d_{G[N(C)]}(z) = \delta(G[N(C)])$. Let $d:=d_{G[N(C)]}(z)$. 
By contracting   $G_1 \setminus N(C)$ onto $z$, we see that   $d_1 \geq |N(C)| - d - 1$. By ($ \bigstar$), 
  \[ e(G_2)  \le 5(|C| + |N(C)|) - 15 - (|N(C)| - d - 1)=5|C| + 4|N(C)| + d - 14. \tag{a} \]
Now let $t := e_G ( C, N(C) )$ and let $p\le 2$ be  the number of vertices in $V(C)$ that are $6$-vertices in $G$.    
Then $e(G_2) = e(C) + t + e(G[N(C)])$.     
Note that   $2e(C) \geq 10(|C| - p) + 6\times p - t=10|C|-4p-t$ and $2e(G[N(C)]) \geq d|N(C)|$. Thus   \[ 2e(G_2) =2e(C) +2 t + 2e(G[N(C)])\geq 10|C| - 4p + t + d|N(C)|. \tag{b} \]
Combining  (a) and (b) yields  \[ 10|C| + 8|N(C)| + 2d - 28 \ge 2e(G_2) \geq 10|C| - 4p + t + d|N(C)| \] and so \[ -t \ge d\big(|N(C)| - 2\big) - 8|N(C)| + 28-4p.\tag{c} \]
Note  that $\delta(G[N(x)]) \geq 5$ by Claim~\ref{c:triangles}, and $N(C)$ is a subset of $N(x)$, so \[ d = \delta(G[N(C)]) \geq 5 - (9 - |N(C)|) = |N(C)| - 4. \]
This, together with (c),  implies that 
	\begin{align*}
	-t &\ge \big(|N(C)| - 4\big)\big(|N(C)| - 2\big) - 8|N(C)| + 28-4p \\
	&= |N(C)|^2 - 14|N(C)| + 36-4p \\
	&= \left( |N(C)|- 7\right)^2 -13-4p,
	\end{align*}
so $-t \geq -13-4p$.
But then  \[ |C|(|C|-1)\ge 2e(C) \geq 10|C|-4p-t \geq 10|C| - 4p-13-4p=10|C|-13-8p,\]  
   Since $2e(C)$ is even, we have 
 \[ |C|(|C|-1)\ge 2e(C) \geq   10|C| - 12-8p.\tag{d}\] 
If $p\ge1$, let  $w\in V(C)$ be  a   $6$-vertex in $G$. Then $G[N_G[w]]=K_7$ and thus  $|N_G(w)\cap V(C)|\ge3$, else $G[\{x\}\cup N_G(w)]$ is not $\km{8}{3}$-free, contrary to Claim~\ref{c:k72}. Thus $|C|\ge4$ when $p\ge1$.
 Suppose $p\le1$.  Then (d)   implies that   $|C|\ge9$ and $e(C)>5|C|-14$; thus     $ G\se C\se\kns$ by the minimality of $G$,  contrary to the choice of $G$.  Thus $p=2$ and $|C|\ge4$. Then $(d)$ yields   $  |C|= 4$ or $|C|\ge7$; and $e(C)\ge 5|C|-14$. By the minimality of $G$, we have  
$  |C|= 4$ or $7\le |C|\le 8$. Let $w'\in V(C)$ be the other    $6$-vertex in $G$.   Suppose $|C|=4$. Then $V(C)$ is a $4$-clique in $G$ because $|N_G(w)\cap V(C)|\ge3$ as observed earlier. Let $y'$ be the vertex in $V(C)\less\{w,w',y\}$. Then $d_G(y')\ge10$. Recall that $N(C)\ne N(x)$. Thus there exist $x_1, x_2 \in N(x)\less N_G(w)$ such that $x_1$ is complete to $\{y', y\}$, and  $x_2$ is adjacent to $x_1$ and some vertex   in $N_G(w)\cap N(x)$.   But then $G[N[x]\cup V(C)]\se\kns$ by first contracting  the edge $x_1x_2$ to a single vertex, and    then  $G[N[x]\less (\{x_1, x_2\}\cup N_G(w)\cap N(x))] $ to another  single vertex, a contradiction. This proves that $7\le |C|\le 8$. Suppose $|C|=8$.  Then (d) implies that $e(C)\ge 5\times 8-14=e(K_8)-2$ and so $C\in\km{8}{2}$, contrary to Claim~\ref{c:k72}. Thus $|C|=7$. By    (d),  we see  that $C=K_7$ because $e(C)\ge 5\times 7-14=e(K_7)$. Note  that  each vertex in $V(C)\less\{w, w'\}$ is adjacent to at least four vertices in $N(x)$. It follows that there exists $x'\in N(x)$ such that $x'$ is adjacent to at least three vertices in $V(C)\less\{w, w'\}$.    But then $G[N[x]\cup V(C)]\se\kns$ by contracting      $G[N[x]]\less x' $ to a single vertex, a contradiction.  \end{proof}

To complete the proof, since $e(G)=5n-14$, we have $\delta(G)\le9$. By Claims~\ref{c:55}, \ref{c:dnot6} and \ref{c:dnot8}, let $x$ be a  $9$-vertex  in $G$.  By  Claim~\ref{c:comporder1},  $G \setminus N_G[x]$ is disconnected. Let $C$ be a component of $G \setminus N_G[x]$. We choose $x$ and $C$   so that $|C|$ is minimized. By Claim~\ref{c:comporder1}, $|C|\ge2$. 
By Claim~\ref{c:comp9}, $C$  contains a $9$-vertex, say  $y$, in $G$.  Note that $N_G(x)\ne N(C)$ by Claim~\ref{c:disconnected}. Thus $N_G(x)\less  N_G(y)\ne\emptyset$.  Let $K$ be the component of $G \setminus N_G[y]$ containing $x$.  Then $|K|\ge 2$ because $N_G(x)\less  N_G(y)\ne\emptyset$.  Note that $N_G(x)\cap N_G(y)\subseteq N(K)$, and every vertex in $N_G(x)\less  N_G(y)$ belongs to $K$. Let  $M$ be  the set of vertices of $N_G(y)$ not adjacent to all other vertices of $N_G(y)$.  
By Claim~\ref{c:disconnected},  $M\not\subseteq N(K)$.
  Let $z \in M \setminus N(K)$.  
Then  $z \notin N_G(x)$, else $z\in N(K)$  because $x\in V(K)$. It follows that  $z \in V(C)$. 
  Let $z'$ be a neighbor of $z$ in $G \setminus N_G[y]$. 
Note that $z' \in \big(N_G(x)\less N_G(y)\big) \cup V(C)$.     By Claims~\ref{c:dnot6}, \ref{c:58}  and \ref{c:dnot8}, we see that $d_G(z)\ge9$ and so $d_G(z')\ge9$.  
Suppose $z' \notin V(K)$. Then $z'\in V(C)$ because every vertex in $N_G(x)\less  N_G(y)$ belongs to $K$. Let  $C'$ be the component of $G \setminus N_G[y]$ that contains $z'$. Then  $|C'|\ge2$ by Claim~\ref{c:comporder1}, and $C'$ is a proper subset of $C$, contrary to our choice of $x$ and $C$. This proves that  $z' \in V(K)$, and so  $z \in N(K)$, contrary to the choice of $z$.   \medskip

This completes the proof of Theorem~\ref{t:exfun}.
\end{proof}  


\end{document}